\numberwithin{equation}{section}
\DeclareMathOperator*{\Argmin}{argmin}
\newcommand{\CAT}{\textup{CAT}}
\theoremstyle{plain}
\newtheorem{theorem}{Theorem}[section]
\newtheorem{lemma}[theorem]{Lemma}
\newtheorem{proposition}[theorem]{Proposition}
\newtheorem{corollary}[theorem]{Corollary}
\newtheorem{example}[theorem]{Example}
\theoremstyle{definition}
\newtheorem{definition}[theorem]{Definition}
\theoremstyle{remark}
\newtheorem{remark}[theorem]{Remark}
\title[Halpern iteration on a geodesic space]{Halpern iteration for a finite family of quasinonexpansive mappings on a complete geodesic space with curvature bounded above by one}
\author[T.~Ezawa]{Tatsuki~Ezawa}
\address[T.~Ezawa]
{Graduate School of Mathematics, Nagoya University, Chikusa-ku, Nagoya 464-8602, Japan}
\email{m14006q@math.nagoya-u.ac.jp}
\author[Y.~Kimura]{Yasunori~Kimura}
\address[Y.~Kimura]
{Department of Information Science, Toho University, Miyama, Funabashi, Chiba 274-8510, Japan}
\email{yasunori@is.sci.toho-u.ac.jp}
\keywords{$\CAT(1)$ space, Halpern iteration, quasinonexpansive, $\Delta$-demicloed, $W$-mapping, fixed point}
\begin{document}
\begin{abstract}
In this paper, we consider the Halpern iteration scheme for a finite family of quasinonexpansive mappings and then prove a strong convergence theorem to their common fixed point in a complete geodesic space with
curvature bounded above by one.
\end{abstract}
\maketitle

\section{Introduction}\label{sec:introduction}

Let us begin with a historical explanation on Halpern schemes. In 1967, Halpern \cite{Halpern} considered an iterative method to find a fixed point of a nonexpansive mapping from the unit ball of a real Hilbert space into itself. In 1992, Wittmann \cite{Wittmann} considered the
following Halpern type iteration scheme in a real Hilbert space $H$: Let $C\subset H$ be a closed convex subset, and $u, x_1\in C$ are given. The iteration scheme is
\[
x_{n+1}:=\alpha_nu+(1-\alpha_n)Tx_n
\]
for all $n\in\mathbb{N}$, where $T$ is a nonexpansive mapping from $C$ into itself such that the set $F(T)$ of its fixed points is nonempty, and where the real sequence $\{\alpha_n\}$ satisfies $\lim_{n\to\infty}\alpha_n=0$, $\sum_{n=1}^\infty\alpha_n=\infty$ and $\sum_{n=1}^\infty|\alpha_{n+1}-\alpha_n|<\infty$. He showed that $\{x_n\}$ converges strongly to a fixed point which is nearest to $u$ in $F(T)$.

In 1997, Shioji-Takahashi \cite{Shioji-Takahashi} extended Wittmann's result to the case where the Hilbert space $H$ is replaced by a Banach space. In 1998, motivated by results of Ishikawa \cite{Ishikawa} and Das-Debata \cite{Das-Debata}, Atsushiba-Takahashi \cite{Atsushiba-Takahashi} considered a variation of Halpern iteration
using $W$-mappings $\{W_n\}$ (see Definition \ref{Wmapping}) in a Banach space: $u, x_1$ are given and
\[
x_{n+1}:=\beta_nu+(1-\beta_n)W_nx_n
\]
for all $n\in\mathbb{N}$.

A $\CAT(0)$ space is a generalization of Hilbert space in a directon different from that of a Banach space. In 2011, Saejung \cite{Saejung} considered the Halpern iteration using single nonexpansive mapping in a $\CAT(0)$ space. In 2011, Phuengrattana-Suantai \cite{Phuen-Suantai} considered the same iteration scheme using $W$-mapping in a convex metric space. Remark that a $\CAT(0)$ space is a convex metric space, so that their result covers the case of CAT(0) space. In 2013, Kimura-Sat\^o \cite{Kimura-Sato} considered the Halpern iteration using single strongly quasinonexpansive mapping in a $\CAT(1)$ space. Remark that a $\CAT(0)$ space is not necessarily a convex metric space.

In this paper, we consider the Halpern iteration with $W$-mapping generated by a finite family of quasinonexpansive mappings in a $\CAT(1)$ space, that is, we showed the following theorem under the similar condition in the result of Kimura-Sat\^{o}:

\begin{theorem}
Let $X$ be a complete $\CAT(1)$ space such that $d(v,v')<\pi/2$ for every $v,v'\in X$. Let $T_1,T_2,\ldots,T_r$ be a finite number of quasinonexpansive and $\Delta$-demiclosed mappings of X into itself such that $F:=\bigcap_{i=1}^rF(T_i)\neq\emptyset$, and let $\alpha_{n,1},a_{n,2},\ldots,\alpha_{n,r}$ be real numbers for $n\in\mathbb{N}$ such that $\alpha_{n,i}\in[a,1-a]$ for every $i=1,2,\ldots,r$, where $0<a<1/2$. Let $W_n$ be the W-mappings of $X$ into itself generated by $T_1,T_2,\ldots,T_r$ and $\alpha_{n,1},\alpha_{n,2},\ldots,\alpha_{n,r}$ for $n\in\mathbb{N}$. Let $\{\beta_n\}$ be a sequence of real numbers such that $0<\beta_n<1$ for every $n\in\mathbb{N}, \lim_{n\to\infty}\beta_n=0$ and $\sum_{n=1}^\infty\beta_n=\infty$. For a given points $u,x_1\in X$, let $\{x_n\}$ be a sequence in $X$ generated by
\[
x_{n+1}=\beta_nu\oplus(1-\beta_n)W_nx_n
\]
for $n\in\mathbb{N}$. Suppose that one of the following conditions holds$:$
\begin{enumerate}
\item[(a)] $\sup_{v,v'\in X}d(v,v')<\pi/2;$
\item[(b)] $d(u,P_Fu)<\pi/4$ and $d(u,P_Fu)+d(x_1,P_Fu)<\pi/2;$
\item[(c)] $\sum_{n=1}^\infty\beta_n^2=\infty.$
\end{enumerate}
Then $\{x_n\}$ converges to $P_Fu$.
\end{theorem}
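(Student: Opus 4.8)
The plan is to put $p = P_F u$ and prove $d(x_n, p) \to 0$. First note that $F = \bigcap_{i=1}^{r} F(T_i)$ is closed and convex, so the metric projection $P_F$ onto it is well defined on $X$, and that the $W$-mapping $W_n$ is itself quasinonexpansive with $F(W_n) = F$ for every $n$; in particular $d(W_n x_n, p) \le d(x_n, p)$, equivalently $\cos d(W_n x_n, p) \ge \cos d(x_n, p)$. I would then run the argument in four stages: (i) boundedness of $\{x_n\}$ inside the region where the spherical comparison estimates are valid; (ii) a Halpern-type recursion for $1 - \cos d(x_n, p)$; (iii) asymptotic regularity together with the identification of $\Delta$-cluster points as common fixed points; and (iv) a real-sequence lemma that closes the argument.

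The central tool for stages (i) and (ii) is the $\CAT(1)$ convexity inequality applied to the geodesic realizing $x_{n+1} = \beta_n u \oplus (1 - \beta_n) W_n x_n$. Writing $s_n = d(u, W_n x_n)$, this gives
\[
\cos d(x_{n+1}, p)\,\sin s_n \ge \cos d(u, p)\,\sin(\beta_n s_n) + \cos d(W_n x_n, p)\,\sin((1-\beta_n) s_n).
\]
Dividing by $\sin s_n$, using $\cos d(W_n x_n, p) \ge \cos d(x_n, p)$, and estimating the two ratios $\sin(\beta_n s_n)/\sin s_n$ and $\sin((1-\beta_n) s_n)/\sin s_n$ yields, on the one hand, that $\cos d(x_n, p)$ stays bounded away from $0$ so that $\{x_n\}$ remains in a ball of radius below $\pi/2$ about $p$ (stage (i)), and, on the other hand, a recursion of the shape $1 - \cos d(x_{n+1}, p) \le (1 - \gamma_n)(1 - \cos d(x_n, p)) + \gamma_n \delta_n$, where $\gamma_n$ is proportional to $\beta_n$ up to curvature factors and $\delta_n$ is an error built from $\cos d(u, p)$.

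For stage (iii) I would work along a subsequence. Since $\beta_n \to 0$ and all relevant distances are below $\pi/2$, the displacement satisfies $d(x_{n+1}, W_n x_n) = \beta_n s_n \to 0$, and a refinement of the inequality above shows $d(x_n, W_n x_n) \to 0$ along any subsequence on which $1 - \cos d(x_n, p)$ fails to decrease. The delicate point is to pass from the composite $W_n$ back to the factors: by unwinding the nested geodesic combinations that define $W_n$ and using $\alpha_{n,i} \in [a, 1-a]$, I would deduce $d(x_n, T_i x_n) \to 0$ for each $i$. Extracting a $\Delta$-convergent subsequence (available since $X$ is complete $\CAT(1)$ and $\{x_n\}$ is bounded) with $\Delta$-limit $z$, the $\Delta$-demiclosedness of each $T_i$ then forces $z = T_i z$, whence $z \in F$.

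Finally, in stage (iv) I would invoke the variational characterization of the projection $p = P_F u$ to show $\limsup_k \delta_{n_k} \le 0$ along the subsequences produced in stage (iii), and then apply a Saejung-type lemma on real sequences (where $\sum \gamma_n = \infty$ is essential) to conclude $1 - \cos d(x_n, p) \to 0$, i.e.\ $d(x_n, p) \to 0$. I expect the \emph{main obstacle} to be the control of the spherical ratios such as $\sin(\beta_n s_n)/\sin s_n$: in the Hilbert or $\CAT(0)$ setting these collapse to $\beta_n$, but on a space of curvature bounded above by $1$ they carry factors like $\cos s_n$ and $s_n/\sin s_n$ that degenerate as distances approach $\pi/2$. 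Keeping these corrections negligible is exactly what forces one of the three hypotheses: (a) bounds the global diameter away from $\pi/2$; (b) constrains the initial configuration $u, x_1$ relative to $p$; and (c) replaces such geometric control by the stronger summability $\sum \beta_n^2 = \infty$, which lets the real-sequence lemma absorb the corrections.
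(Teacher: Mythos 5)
Your outline follows essentially the same route as the paper's proof: the recursion for $1-\cos d(x_n,p)$ coming from the spherical comparison estimate (Lemma~\ref{KS}), the verification that the coefficients $\gamma_n$ sum to infinity separately under (a), (b), (c), the unwinding of the nested convex combinations defining $W_n$ using $\alpha_{n,i}\in[a,1-a]$, the identification of a $\Delta$-cluster point as a common fixed point via $\Delta$-demiclosedness, and the closing appeal to the Saejung--Yotkaew real-sequence lemma (Lemma~\ref{bankoku}) are all exactly the steps taken in \S\ref{sec:Main result}.

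One step of your stage (iii) would fail as literally written: you propose to deduce $d(x_n,T_ix_n)\to 0$ for each $i$. Since the $T_i$ are only quasinonexpansive and $\Delta$-demiclosed --- in particular they need not be continuous --- you cannot transfer asymptotic regularity from the intermediate iterates back to $x_n$ when $i\geq 2$. What the unwinding actually yields (via Lemma~\ref{Strongly quasinonexpansive sequence} and an induction on the level of the $W$-mapping) is $d\bigl(y_j^{(i-1)},T_iy_j^{(i-1)}\bigr)\to 0$ for the intermediate points $y_j^{(i-1)}:=U_{n_j,i-1}x_{n_j}$, together with $d\bigl(x_{n_j},y_j^{(i-1)}\bigr)\to 0$. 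That is still sufficient: $\{y_j^{(i-1)}\}$ then $\Delta$-converges to the same limit $z$ as $\{x_{n_j}\}$, and applying $\Delta$-demiclosedness of $T_i$ to the sequence $\{y_j^{(i-1)}\}$ rather than to $\{x_{n_j}\}$ gives $z\in F(T_i)$; this is precisely how the paper argues. With that adjustment your sketch matches the published proof.
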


The proof will be given in \S \ref{sec:Main result}.

In \S \ref{sec:Applications}, we give some applications of the main theorem. In Theorem \ref{Convex function}, we give an approximation of a minimizer of convex functions on a complete $\CAT(1)$ space. A further application will be given in Theorem \ref{Metric projection}. We also give an example of quasinonexpansive mappings which is not strongly quasinonexpansive in Example \ref{example}.

\section{Preliminaries}\label{sec:Preliminaries}

Let $(X,d)$ be a metric space. For $x,y\in X$, a mapping $c:[0,l]\to X$ is a geodesic of $x,y\in X$ if $c(0)=x,c(l)=y$ and $d(c(s), c(t))=|s-t|$ for all $s,t\in[0,l]$. For $r>0$, if a geodesic exists for every $x, y\in X$ with $d(x,y)<r$, then $X$ is called an $r$-geodesic metric space. If a geodesic is unique for every $x,y\in X$, we define $[x,y]:=c([0,l])$ and it is called a geodesic segment of $x,y\in X$. In what follows, a metric space $X$ is always assumed to be $\pi$-geodesic and every geodesic is unique. For $x,y\in X$, let $c:[0,l]\to X$ be a geodesic of $x,y\in X$. For $t\in[0,l]$, we denote
\[
tx\oplus(1-t)y:=c((1-t)l).
\]
In other words, $z:=tx\oplus(1-t)y$ satisfies $d(x,z)=(1-t)d(x,y)$. Let $X$ be a geodesic metric space. A geodesic triangle is defined by the union of segment $\triangle(x,y,z):=
[x,y]\cup [y,z]\cup [z,x]$. Let $\mathbb{S}^2$ be the unit sphere of the Euclidean space $\mathbb{R}^3$ and $d_{\mathbb{S}^2}$ is the spherical metric on $\mathbb{S}^2$. Then, for $x,y,z\in X$ satisfying $d(x,y)+d(y,z)+d(z,x)<2\pi$, there exist $\overline{x},\overline{y},\overline{z}\in \mathbb{S}^2$ such that $d(x,y)=d_{\mathbb{S}^2}(\overline{x},\overline{y}), d(y,z)=d_{\mathbb{S}^2}(\overline{y},\overline{z})$ and $d(z,x)=d_{\mathbb{S}^2}(\overline{z},\overline{x})$. A point $\overline{p}\in[\overline{x},\overline{y}]$ is called a comparison point for $p\in[x,y]$ if $d_{\mathbb{S}^2}(\overline{x},\overline{p})=d(x,p)$. If every $p,q$ on the triangle $\triangle(x,y,z)$ with $d(x,y)+d(y,z)+d(z,x)<2\pi$ and their comparison points $\overline{p},\overline{q}\in \triangle(\overline{x},\overline{y},\overline{z})$ satisfy that
\[
d(p,q)\leq d_{\mathbb{S}^2}(\overline{p},\overline{q}),
\]
$X$ is called a $\CAT(1)$ space. We refer details and examples of a $\CAT(1)$ space to \cite{Bridson-Haefliger}.

\begin{theorem}[Kimura-Sat\^o \cite{KS2}]\label{pal}
Let $x,y,z$ be points in {\rm CAT(1)} space such that $d(x,y)+d(y,z)+d(z,x)<2\pi$. Let $v:=tx\oplus(1-t)y$ for some $t\in[0,1]$. Then
\[
\cos d(v,z)\sin d(x,y)\geq \cos d(x,z)\sin(td(x,y)) + \cos d(y, z)\sin((1-t)d(x, y)).
\]
\end{theorem}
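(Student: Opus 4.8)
The plan is to reduce the asserted inequality to an exact identity on the comparison sphere $\mathbb{S}^2$ and then invoke the $\CAT(1)$ comparison inequality to pass from spherical equality to the desired inequality in $X$. Before doing so I would check the hypotheses that the comparison machinery needs. If $x=y$ the inequality reads $\cos d(x,z)\cdot 0\ge \cos d(x,z)\sin(0)+\cos d(y,z)\sin(0)$, i.e.\ $0\ge 0$, so assume $x\neq y$. From $d(x,y)+d(y,z)+d(z,x)<2\pi$ together with the triangle inequality $d(x,y)\le d(y,z)+d(z,x)$ one gets $2d(x,y)<2\pi$, hence $d(x,y)<\pi$ and in particular $\sin d(x,y)>0$; this is exactly the sign information I will need when multiplying an inequality by $\sin d(x,y)$ at the end. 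The same perimeter condition guarantees the existence of a comparison triangle $\triangle(\overline{x},\overline{y},\overline{z})\subset\mathbb{S}^2$.

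Next, let $\overline{v}\in[\overline{x},\overline{y}]$ be the comparison point for $v$, so that $d_{\mathbb{S}^2}(\overline{x},\overline{v})=d(x,v)=(1-t)d(x,y)$ and therefore $d_{\mathbb{S}^2}(\overline{y},\overline{v})=td(x,y)$. The heart of the argument is a spherical identity. Regarding $\overline{x},\overline{y},\overline{z},\overline{v}$ as unit vectors in $\mathbb{R}^3$ and using that $\cos d_{\mathbb{S}^2}$ equals the Euclidean inner product, I would write $\overline{v}$ as the linear combination along the great-circle arc
\[
\overline{v}=\frac{\sin(td(x,y))}{\sin d(x,y)}\,\overline{x}+\frac{\sin((1-t)d(x,y))}{\sin d(x,y)}\,\overline{y},
\]
which one checks by evaluating $\ip{\overline{v}}{\overline{x}}$ and $\ip{\overline{v}}{\overline{y}}$ against $\cos((1-t)d(x,y))$ and $\cos(td(x,y))$ via the angle-subtraction formula for sine. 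Taking the inner product with $\overline{z}$ and using $d_{\mathbb{S}^2}(\overline{x},\overline{z})=d(x,z)$, $d_{\mathbb{S}^2}(\overline{y},\overline{z})=d(y,z)$, this yields the exact identity
\[
\cos d_{\mathbb{S}^2}(\overline{v},\overline{z})\sin d(x,y)=\cos d(x,z)\sin(td(x,y))+\cos d(y,z)\sin((1-t)d(x,y)).
\]

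Finally I would transfer this back to $X$. By the defining $\CAT(1)$ comparison inequality, $d(v,z)\le d_{\mathbb{S}^2}(\overline{v},\overline{z})$; since both numbers lie in $[0,\pi]$, where cosine is decreasing, this gives $\cos d(v,z)\ge\cos d_{\mathbb{S}^2}(\overline{v},\overline{z})$. Multiplying by $\sin d(x,y)\ge 0$ preserves the inequality, and combining with the spherical identity above produces the claim. The vector computation is routine; the only genuine care is the bookkeeping that forces $d(x,y)<\pi$ and keeps every relevant distance in $[0,\pi]$, so that the monotonicity of cosine and the nonnegativity of $\sin d(x,y)$ are both applied in the correct direction. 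This is where I expect the main obstacle to lie, since getting any of these sign or range conditions wrong would reverse the inequality.
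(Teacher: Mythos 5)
Your argument is correct: the reduction to the exact spherical identity $\cos d_{\mathbb{S}^2}(\overline{v},\overline{z})\sin d(x,y)=\cos d(x,z)\sin(td(x,y))+\cos d(y,z)\sin((1-t)d(x,y))$ via the linear representation of $\overline{v}$ on the great-circle arc, followed by the $\CAT(1)$ comparison inequality and monotonicity of cosine on $[0,\pi]$, is sound, and you correctly secure the needed sign conditions ($d(x,y)<\pi$ from the perimeter bound, the degenerate case $x=y$). The paper itself gives no proof of this statement --- it is imported from Kimura--Sat\^o \cite{KS2} --- and your proof is essentially the standard comparison-triangle argument used there, so there is nothing further to reconcile.
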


\begin{corollary}[Kimura-Sat\^o \cite{Kimura-Sato}]\label{pall}
Let $x,y,z$ be points in $\CAT(1)$ space such that $d(x,y)+d(y,z)+d(z,x)<2\pi$. Let $v:=tx\oplus(1-t)y$ for some $t\in[0,1]$. Then
\[
\cos d(v,z)\geq t\cos d(x,z) + (1-t)\cos d(y,z).
\]
\end{corollary}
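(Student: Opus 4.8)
The plan is to deduce the corollary from Theorem~\ref{pal} by dividing through by $\sin d(x,y)$ and then bounding the resulting sine ratios below by the weights $t$ and $1-t$. Put $a:=d(x,y)$. If $a=0$ then $x=y=v$ and the claim reads $\cos d(x,z)\ge t\cos d(x,z)+(1-t)\cos d(x,z)$, an equality, so I may assume $a>0$. Since $X$ is $\pi$-geodesic and the point $v=tx\oplus(1-t)y$ is defined through the unique geodesic $[x,y]$, we are in the range $a\in\openitvl{0}{\pi}$, so that $\sin a>0$. Dividing the inequality of Theorem~\ref{pal} by $\sin a$ then gives
\[
\cos d(v,z)\ge\frac{\sin(ta)}{\sin a}\cos d(x,z)+\frac{\sin((1-t)a)}{\sin a}\cos d(y,z).
\]

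Next I would record the elementary estimate that, for $a\in\openitvl{0}{\pi}$ and $t\in[0,1]$,
\[
\sin(ta)\ge t\sin a \qquad\text{and}\qquad \sin((1-t)a)\ge(1-t)\sin a.
\]
Both follow from the concavity of $\sin$ on $[0,\pi]$: writing $ta=t\cdot a+(1-t)\cdot 0$ gives $\sin(ta)\ge t\sin a+(1-t)\sin 0=t\sin a$, and symmetrically for $(1-t)a$; equivalently, the graph of $\sin$ lies above the chord $s\mapsto(s/a)\sin a$ on $[0,a]$. Dividing by $\sin a>0$ yields $\sin(ta)/\sin a\ge t$ and $\sin((1-t)a)/\sin a\ge 1-t$.

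It remains to feed these two bounds into the first display. The point is to replace the coefficients $\sin(ta)/\sin a$ and $\sin((1-t)a)/\sin a$ by the smaller numbers $t$ and $1-t$ without raising the right-hand side. Since each sine ratio dominates the corresponding weight, this substitution lowers the right-hand side -- and hence keeps the inequality valid -- precisely when the factors $\cos d(x,z)$ and $\cos d(y,z)$ they multiply are nonnegative. This sign bookkeeping is the one delicate step and the main obstacle: a larger nonnegative multiplier only helps against a nonnegative cosine, and the bare perimeter bound $d(x,y)+d(y,z)+d(z,x)<2\pi$ does not by itself control these signs. In the regime where the estimate is applied one has $d(x,z),d(y,z)\le\pi/2$, so $\cos d(x,z)\ge 0$ and $\cos d(y,z)\ge 0$, the substitution is legitimate, and one concludes
\[
\cos d(v,z)\ge t\cos d(x,z)+(1-t)\cos d(y,z),
\]
as required.
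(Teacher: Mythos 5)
Your derivation is the intended one: the paper itself gives no proof of this corollary (it is quoted from Kimura--Sat\^o), and the standard route is exactly what you follow --- divide the inequality of Theorem~\ref{pal} by $\sin d(x,y)$ (legitimate, since the triangle inequality gives $2d(x,y)\le d(x,y)+d(y,z)+d(z,x)<2\pi$, hence $d(x,y)<\pi$ and $\sin d(x,y)>0$ when $x\ne y$), and then use concavity of $\sin$ on $[0,\pi]$ to get $\sin(ta)\ge t\sin a$ and $\sin((1-t)a)\ge(1-t)\sin a$. All of those steps are correct.

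The ``sign bookkeeping'' you flag at the end is not a cosmetic worry: it is the crux, and the corollary as printed is in fact \emph{false} under the stated hypotheses alone, so the gap in your argument cannot be closed without strengthening them. On $\mathbb{S}^2$, take a triangle with $d(x,z)=d(y,z)=3\pi/5$ and $d(x,y)=\pi/2$ (perimeter $17\pi/10<2\pi$; such a triangle exists, has all angles less than $\pi$, and its convex hull is a uniquely geodesic $\CAT(1)$ space). Writing the points as unit vectors, the midpoint $v$ of $[x,y]$ is $(x+y)/\norm{x+y}$ with $\norm{x+y}=\sqrt2$, so
\[
\cos d(v,z)=\frac{\cos d(x,z)+\cos d(y,z)}{\sqrt2}=\sqrt2\cos\frac{3\pi}{5}\approx-0.437,
\]
whereas $\tfrac12\cos d(x,z)+\tfrac12\cos d(y,z)=\cos(3\pi/5)\approx-0.309$; the claimed inequality fails. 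Your proof becomes complete and correct exactly when one adds the hypothesis $\cos d(x,z)\ge0$ and $\cos d(y,z)\ge0$ (for instance $d(x,z),d(y,z)\le\pi/2$), which is what makes the coefficient substitution monotone. That extra hypothesis holds everywhere Theorem~\ref{Main} invokes the corollary, since there $d(v,v')<\pi/2$ for all points of $X$; note, however, that Proposition~\ref{FixedPoint} applies the corollary in a bare $\CAT(1)$ space, so the missing hypothesis really should be added both to the corollary and there.
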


Let $X$ be a complete $\CAT(1)$ space such that $d(v,v')<\pi/2$ for all $v,v'\in X$, and let $C$ be a nonempty closed convex subset of $X$. Then for any $x\in X$, there exists a unique point $P_Cx\in C$ such that
\[
d(x,P_Cx)=\inf_{y\in C}d(x,y).
\]
That is, using similar techniques to the cace of Hilbert space, we can define metric projection $P_C$ from $X$ onto $C$ such that $P_Cx$ is the nearest point of $C$ to $x$. Let $X$ be a metric space and $\{x_n\}$ a bounded sequence of $X$. The asymptotic center $AC(\{x_n\})$ of $\{x_n\}$ is defined by
\[
AC(\{x_n\}):=\left\{ z\ |\ \limsup_{n\to\infty}d(z,x_n)=\inf_{x\in X}\limsup_{n\to\infty}d(x,x_n)\right\}.
\]
We say that $\{x_n\}$ is $\Delta$-convergent to a point $z$ if for all subsequences $\{x_{n_i}\}$ of $\{x_n\}$, its asympotic center consists only of $z$, that is,  $AC(\{x_{n_i}\})=\{z\}$. Let $X$ be a metric space. Let $T$ be a mapping of $X$ into itself. Then, $T$ is said to be nonexpansive if $d(Tx,Ty)\leq d(x,y)$ for all $x,y\in X$. Hereafter we denote
\[
F(T):=\{z\ |\ Tz=z\}
\]
the set of fixed points. Then $T$ is said to be quasinonexpansive if $d(Tx, p)\leq d(x, p)$ for all $x\in X$ and $p\in F(T)$. Using similar techniques to the case of Hilbert space, we can prove that $F(T)$ is a closed convex subset of $X$. $T$ is said to be strongly quasinonexpansive if it is quasinonexpansive, and for every $p\in F(T)$ and every sequence in $X$ satisfying that $\sup_{n\in\mathbb{N}}d(x_n,p)<\pi/2$ and $\lim_{n\to\infty}(\cos d(x_n, p)/\cos d(Tx_n, p))=0$, it follows that $\lim_{n\to\infty}d(x_n,Tx_n) =0$. $T$ is said to be $\Delta$-demiclosed if for any $\Delta$-convergent sequence $\{x_n\}$ in $X$, its $\Delta$-limit belongs to $F(T)$ whenever $\lim_{n\to\infty}d(Tx_n,x_n)=0$.

The notation of $W$-mapping is originally proposed by Takahashi. We use the same notation in the setting of geodesic space as following:

\begin{definition}[Takahashi \cite{Takahashi}]\label{Wmapping}\normalfont
Let $X$ be a geodesic metric space. Let $T_1,T_2,\ldots,T_r$ be a finite number of mappings of $X$ into itself and $\alpha_1,\alpha_2,\ldots,\alpha_r$ be real numbers such that $0\leq \alpha_i\leq 1$ for every $i=1,2,\ldots,r$. Then, we define a mapping $W$ of $X$ into itself as follows$:$
\begin{align*}
U_1 &:=  \alpha_1T_1\oplus(1-\alpha_1)I,  \\
U_2 &:= \alpha_2T_2U_1\oplus(1-\alpha_2)I,  \\
 &\cdots& \\
U_r &:= \alpha_rT_rU_{r-1}\oplus(1-\alpha_r)I, \\
W &:= U_r.
\end{align*}
Such a mapping $W$ is called a $W$-mapping generated by $T_1,T_2,\ldots,T_r$ and $\alpha_1,\alpha_2,\ldots,\alpha_r$.
\end{definition}

The following lemmas are important for our main result.

\begin{lemma}[Kimura-Sat\^o \cite{Kimura-Sato}]\label{quasinonexpansive}
Let $T$ be a quasinonexpansive mapping defined on a {\rm CAT(1)} space. For any real number $\alpha\in[0,1]$, the mapping $\alpha T\oplus(1-\alpha)I$ is quasinonexpansive.
\end{lemma}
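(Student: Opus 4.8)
The plan is to write $S := \alpha T \oplus (1-\alpha) I$ and verify the definition of quasinonexpansiveness directly, i.e.\ that $d(Sx,p)\le d(x,p)$ for every $x\in X$ and every $p\in F(S)$. First I would dispose of the boundary values: if $\alpha=0$ then $S=I$ is trivially quasinonexpansive, and if $\alpha=1$ then $S=T$ is quasinonexpansive by hypothesis. So assume $\alpha\in(0,1)$.

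Next I would identify the fixed-point set, since quasinonexpansiveness is tested against $F(S)$ whereas the hypothesis on $T$ concerns $F(T)$. For $p\in X$ the point $Sp=\alpha Tp\oplus(1-\alpha)p$ satisfies $d(p,Sp)=\alpha\,d(Tp,p)$ by the defining property of $\oplus$. Hence $Sp=p$ forces $\alpha\,d(Tp,p)=0$, and as $\alpha>0$ this gives $Tp=p$; the reverse inclusion $F(T)\subseteq F(S)$ is immediate because $\alpha p\oplus(1-\alpha)p=p$. Thus $F(S)=F(T)$, so any $p\in F(S)$ is a fixed point of $T$ and the quasinonexpansiveness of $T$ yields $d(Tx,p)\le d(x,p)$.

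The heart of the argument is then a single application of Corollary \ref{pall} to the triangle with vertices $Tx,x,p$ and interior point $v=Sx=\alpha Tx\oplus(1-\alpha)x$, which gives
\[
\cos d(Sx,p)\ge \alpha\cos d(Tx,p)+(1-\alpha)\cos d(x,p).
\]
Since $d(Tx,p)\le d(x,p)$ and $\cos$ is decreasing on $[0,\pi]$, we have $\cos d(Tx,p)\ge\cos d(x,p)$, so the right-hand side is at least $\cos d(x,p)$. Hence $\cos d(Sx,p)\ge\cos d(x,p)$, and inverting the monotone cosine gives $d(Sx,p)\le d(x,p)$, as desired.

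The main obstacle, and the only place requiring care, is legitimizing this chain: Corollary \ref{pall} requires the perimeter condition $d(Tx,x)+d(x,p)+d(p,Tx)<2\pi$, and the two uses of cosine monotonicity require the distances $d(Tx,p)$, $d(x,p)$, $d(Sx,p)$ to lie in $[0,\pi]$. Both hold in the regime where the lemma is actually invoked, namely a $\CAT(1)$ space of diameter less than $\pi/2$, for then $d(Tx,p),d(x,p)<\pi/2$, whence $d(Tx,x)\le d(Tx,p)+d(p,x)<\pi$ and the perimeter is strictly below $2\pi$, while all three distances stay below $\pi/2<\pi$. I would therefore carry out the estimate inside this regime, which is precisely the setting in which $\alpha T\oplus(1-\alpha)I$ is used in the construction of the $W$-mappings.
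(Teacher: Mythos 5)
Your argument is correct and is exactly the standard one the paper has in mind: the paper omits the proof, deferring to Kimura--Sat\^o, whose argument is precisely this combination of $F(\alpha T\oplus(1-\alpha)I)=F(T)$ with the convexity inequality of Corollary~\ref{pall} and the monotonicity of cosine. Your explicit attention to the perimeter condition and to restricting to the diameter-$<\pi/2$ regime in which the lemma is actually invoked is a reasonable (and slightly more careful) reading of the somewhat loosely stated hypothesis ``a $\CAT(1)$ space.''
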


The proof of Lemma \ref{quasinonexpansive} is essentially obtained in \cite{Kimura-Sato}, so we omit the proof.

\begin{lemma}[Kimura-Sat\^o \cite{Kimura-Sato}]\label{delta-demiclosed}
Let $T$ be a nonexpansive mapping on a {\rm CAT(1)} space. For a any real number $\alpha\in(0,1]$, the mapping $\alpha T\oplus(1-\alpha)I$ is $\Delta$-demiclosed.
\end{lemma}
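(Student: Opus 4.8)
The plan is to reduce the $\Delta$-demiclosedness of $S:=\alpha T\oplus(1-\alpha)I$ to a demiclosedness property of the nonexpansive mapping $T$ itself, exploiting that $\alpha>0$. First I would record two elementary facts. Directly from the definition of the convex combination $\oplus$ (namely that $tx\oplus(1-t)y$ has distance $(1-t)d(x,y)$ to $x$), writing $Sx=\alpha Tx\oplus(1-\alpha)x$ gives
\[
d(x,Sx)=\alpha\, d(x,Tx)\qquad(x\in X).
\]
In particular $d(z,Sz)=0$ if and only if $d(z,Tz)=0$, so $F(S)=F(T)$. Hence, if $\{x_n\}$ is $\Delta$-convergent to $z$ and $d(Sx_n,x_n)\to 0$, then $d(Tx_n,x_n)=\alpha^{-1}d(Sx_n,x_n)\to 0$ as well, and it suffices to prove $z\in F(T)$; this at once yields $z\in F(S)$, which is exactly what $\Delta$-demiclosedness of $S$ demands.

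For the remaining step I would invoke the asymptotic center. Assume $\{x_n\}$ is $\Delta$-convergent to $z$ with $d(Tx_n,x_n)\to 0$. By the definition of $\Delta$-convergence, $AC(\{x_n\})=\{z\}$, so $z$ is the unique minimizer of $r(\cdot):=\limsup_{n\to\infty}d(\cdot,x_n)$. Nonexpansiveness of $T$ together with the triangle inequality yields
\[
d(Tz,x_n)\le d(Tz,Tx_n)+d(Tx_n,x_n)\le d(z,x_n)+d(Tx_n,x_n),
\]
and passing to $\limsup$ (recalling $d(Tx_n,x_n)\to 0$) gives $r(Tz)\le r(z)$. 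Since $r(z)=\inf_{x\in X}r(x)\le r(Tz)$, we obtain $r(Tz)=\inf_{x\in X}r(x)$, so $Tz$ also lies in $AC(\{x_n\})=\{z\}$; that is, $Tz=z$. Thus $z\in F(T)=F(S)$.

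The argument is short, and the two places where care is needed are both already supplied by the hypotheses. The reduction from $S$ to $T$ uses $\alpha>0$ essentially, both to divide by $\alpha$ in $d(Tx_n,x_n)=\alpha^{-1}d(Sx_n,x_n)$ and to guarantee $F(S)=F(T)$. More delicate is the asymptotic-center step, which requires $AC(\{x_n\})$ to be a nonempty singleton: in a $\CAT(1)$ space this is not automatic for an arbitrary bounded sequence, but it is precisely what the definition of $\Delta$-convergence of $\{x_n\}$ to $z$ asserts, so no separate existence or uniqueness argument is needed. I would therefore expect the only genuine obstacle to be the bookkeeping of the first paragraph—confirming $d(x,Sx)=\alpha\,d(x,Tx)$ from the geodesic convention and deducing $F(S)=F(T)$—after which the demiclosedness of $T$ follows from the single inequality $r(Tz)\le r(z)$ and minimality of the asymptotic center.
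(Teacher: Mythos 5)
Your argument is correct. The paper itself gives no proof of this lemma (it is quoted from Kimura--Sat\^o), but your reduction $d(x,Sx)=\alpha\,d(x,Tx)$ with $F(S)=F(T)$, followed by the asymptotic-center argument $r(Tz)\le r(z)$ forcing $Tz\in AC(\{x_n\})=\{z\}$, is precisely the standard route taken in the cited source, and you correctly observe that the singleton property of the asymptotic center is supplied by the definition of $\Delta$-convergence rather than needing a separate existence/uniqueness argument.
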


\begin{lemma}[Saejung-Yotkaew \cite{Saejung-Yatkew}]\label{bankoku}
Let $\{s_n\}, \{t_n\}$ be sequences of real numbers  such that $s_n\geq 0$  for every $n\in\mathbb{N}$. Let $\{\gamma_n\}$ be a sequence in $(0,1)$ such that $\sum_{n=0}^\infty\gamma_n=\infty$. Suppose that $s_{n+1}\leq(1-\gamma_n)s_n+\gamma_nt_n$ for every $n\in\mathbb{N}$. If $\limsup_{j\to\infty}t_{n_j}\leq 0$ for every subsequence $\{n_j\}$ of $\mathbb{N}$ satisfying $\liminf_{j\to\infty}(s_{n_j+1}-s_{n_j})\geq 0$, then $\lim_{n\to\infty}s_n=0$.
\end{lemma}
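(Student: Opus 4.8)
The plan is to prove the equivalent statement $\limsup_{n\to\infty}s_n\le 0$; since $s_n\ge 0$ for all $n$, this immediately upgrades to $\lim_{n\to\infty}s_n=0$. Throughout I would work with the increment form of the recursion,
\[
s_{n+1}-s_n\le \gamma_n(t_n-s_n)\qquad(n\in\mathbb{N}),
\]
obtained by subtracting $s_n$ from both sides of $s_{n+1}\le(1-\gamma_n)s_n+\gamma_n t_n$. The argument then splits according to whether $\{s_n\}$ is eventually monotone, the decisive feature being that the subsequence hypothesis gets applied to a \emph{different} subsequence in each case.

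First I would treat the case in which $\{s_n\}$ is eventually nonincreasing, say $s_{n+1}\le s_n$ for all $n\ge N$. Then $\{s_n\}_{n\ge N}$ converges to some $\ell\ge 0$ and $s_{n+1}-s_n\to 0$, so the whole tail $\{n:n\ge N\}$ is a subsequence along which $\liminf(s_{n+1}-s_n)\ge 0$; the hypothesis then gives $\limsup_{n\to\infty}t_n\le 0$. To see $\ell=0$, I would suppose $\ell>0$, so that eventually $s_n\ge\ell$ and $t_n\le \ell/2$, whence the increment form yields $s_{n+1}-s_n\le \gamma_n(\ell/2-\ell)=-(\ell/2)\gamma_n$; summing from a large index onward and invoking $\sum_n\gamma_n=\infty$ forces $s_{M+1}\to-\infty$ as $M\to\infty$, contradicting $s_{M+1}\ge 0$. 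Hence $\ell=0$ and $\limsup_n s_n=0$ in this case.

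The substantial case is when $s_n<s_{n+1}$ for infinitely many $n$. Here I would borrow Maing\'e's device: for all large $n$ set $\tau(n):=\max\{k\le n:s_k<s_{k+1}\}$, which is well defined, satisfies $\tau(n)\to\infty$, and has the property that $s_{\tau(n)+1}\ge s_{\tau(n)+2}\ge\cdots\ge s_{n+1}$, so in particular $s_{n+1}\le s_{\tau(n)+1}$. Passing to a strictly increasing selection of the distinct values of $\{\tau(n)\}$, the increments $s_{\tau(n)+1}-s_{\tau(n)}$ are positive, so $\liminf\ge 0$ and the hypothesis gives $\limsup_n t_{\tau(n)}\le 0$. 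Substituting $s_{\tau(n)}<s_{\tau(n)+1}$ into the recursion at index $\tau(n)$ and using $1-\gamma_{\tau(n)}>0$ together with $\gamma_{\tau(n)}>0$ yields $s_{\tau(n)+1}<t_{\tau(n)}$; therefore $\limsup_n s_{n+1}\le \limsup_n s_{\tau(n)+1}\le \limsup_n t_{\tau(n)}\le 0$, which is the desired bound.

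The main obstacle is the bookkeeping in the non-monotone case: checking that $\tau(n)$ is eventually defined and tends to infinity, that the block from $\tau(n)+1$ to $n+1$ is nonincreasing (so that $s_{n+1}\le s_{\tau(n)+1}$), and that the strictly increasing subsequence extracted from $\{\tau(n)\}$ genuinely triggers the hypothesis. A secondary subtlety worth flagging is that the divergence condition $\sum_n\gamma_n=\infty$ is needed only in the monotone case, where it converts a per-step decrease of order $\gamma_n$ into an unbounded total decrease, whereas the non-monotone case uses only $\gamma_n\in(0,1)$. Once these points are in place, the two cases together give $\limsup_n s_n\le 0$, and hence $\lim_{n\to\infty}s_n=0$.
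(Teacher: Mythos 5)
Your proof is correct and complete in its details. The paper itself does not prove Lemma \ref{bankoku} --- it quotes it from Saejung--Yotkaew \cite{Saejung-Yatkew} --- and your two-case argument (the eventually nonincreasing case settled by $\limsup_n t_n\le 0$ together with $\sum_n\gamma_n=\infty$ driving $s_n$ below zero if $\lim s_n>0$, and the remaining case settled by Maing\'e's device $\tau(n)=\max\{k\le n : s_k<s_{k+1}\}$ yielding $s_{n+1}\le s_{\tau(n)+1}<t_{\tau(n)}$) is essentially the original proof in that reference, so it matches the intended source argument.
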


\begin{lemma}[Esp\'\i nola-Fern\'andez-Le\'on \cite{Espinola-Fernandez-Leon}]\label{subseq}
Let $X$ be a complete $\CAT(1)$ space, and $\{x_n\}$ be a sequence in $X$. If there exists $x\in X$ such that $\limsup_{n\to\infty}d(x_n,x)<\pi/2$, then $\{x_n\}$ has a $\Delta$-convergent subsequence.
\end{lemma}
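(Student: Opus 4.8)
The plan is to reduce the statement to two facts about the asymptotic center: that under the standing hypothesis every subsequence has a \emph{singleton} asymptotic center, and that one can always pass to a subsequence along which the asymptotic radius is stable under further extraction. First I would record that $\limsup_{n\to\infty} d(x_n,x)<\pi/2$ forces the asymptotic radius $r:=\inf_{y\in X}\limsup_{n\to\infty} d(y,x_n)$ to satisfy $r<\pi/2$, and that the same bound is inherited by every subsequence $\{x_{n_i}\}$. Consequently all distances appearing below can be kept strictly below $\pi/2$, which is exactly the range in which $\cos$ is positive and decreasing and in which the perimeter condition of Theorem \ref{pal} is met.

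The geometric core is a uniform convexity estimate for the functional $y\mapsto\limsup_{n\to\infty} d(y,x_n)$. Given distinct points $y_j,y_k$, I apply Theorem \ref{pal} to the midpoint $m=\frac12 y_j\oplus\frac12 y_k$ with $z=x_n$ and $t=1/2$; dividing by $\sin d(y_j,y_k)=2\sin(d(y_j,y_k)/2)\cos(d(y_j,y_k)/2)$ and passing to a subsequence realizing $\limsup_{n\to\infty} d(m,x_n)$ yields
\[
\cos\Big(\limsup_{n\to\infty} d(m,x_n)\Big)\;\geq\;\frac{\cos\big(\limsup_{n\to\infty} d(y_j,x_n)\big)+\cos\big(\limsup_{n\to\infty} d(y_k,x_n)\big)}{2\cos(d(y_j,y_k)/2)}.
\]
Applied to a minimizing sequence (so both numerator terms tend to $\cos r$) this forces $\cos(d(y_j,y_k)/2)\to 1$, hence the minimizing sequence is Cauchy and, by completeness of $X$, converges to a minimizer $z$ with $\limsup_{n\to\infty} d(z,x_n)=r$. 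Applied instead to two minimizers it forces $d(z_1,z_2)=0$. Thus the asymptotic center is a single point, and since the hypothesis passes to subsequences, the conclusion holds verbatim for every subsequence.

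Next I would extract a \emph{regular} subsequence by a standard diagonal construction: setting $\rho_{k+1}$ to be the infimum of the asymptotic radii over subsequences of the previously chosen subsequence $S_k$, choosing $S_{k+1}\subseteq S_k$ with radius within $1/k$ of $\rho_{k+1}$, and taking the diagonal subsequence $\{x_{n_i}\}$. Because the $\rho_k$ increase to a limit $\rho$ and tails of the diagonal lie in each $S_k$, every subsequence of $\{x_{n_i}\}$ has asymptotic radius exactly $\rho$. This step is purely metric and uses none of the $\CAT(1)$ geometry.

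Finally I would combine the two. Let $z$ be the unique asymptotic center of the regular subsequence $\{x_{n_i}\}$, so $\limsup_{i\to\infty} d(z,x_{n_i})=\rho$, and let $\{x_{n_{i_k}}\}$ be any further subsequence. Then $\limsup_{k\to\infty} d(z,x_{n_{i_k}})\le\limsup_{i\to\infty} d(z,x_{n_i})=\rho$, while regularity gives the asymptotic radius of $\{x_{n_{i_k}}\}$ equal to $\rho$; hence $z$ realizes that radius and is an asymptotic center of $\{x_{n_{i_k}}\}$, so by uniqueness its asymptotic center is exactly $\{z\}$. Thus every subsequence of $\{x_{n_i}\}$ has asymptotic center $\{z\}$, which is precisely $\Delta$-convergence of $\{x_{n_i}\}$ to $z$. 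The hard part will be the uniform convexity estimate of the second paragraph: everything hinges on turning Theorem \ref{pal} into a quantitative statement that simultaneously yields existence (Cauchyness of minimizing sequences) and uniqueness of the asymptotic center, and on checking that the relevant distances remain in $[0,\pi/2)$ so that the monotonicity of $\cos$ may be invoked throughout.
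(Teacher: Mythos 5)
Your proposal is correct, but there is nothing in the paper to compare it against: the paper does not prove Lemma \ref{subseq} at all, it simply imports it from Esp\'\i nola--Fern\'andez-Le\'on \cite{Espinola-Fernandez-Leon}. What you have written is essentially a reconstruction of the argument in that reference: existence and uniqueness of asymptotic centers for sequences of asymptotic radius $r<\pi/2$ (their Proposition on asymptotic centers), combined with the classical Goebel--Lim extraction of a \emph{regular} subsequence, all of whose subsequences have the same asymptotic radius. Your version has the pleasant feature of staying inside the paper's own toolkit, deriving the uniform-convexity estimate from Theorem \ref{pal} rather than from scratch. The three delicate points are all handled or correctly flagged: (i) the limsup manipulation, which you fix by extracting a subsequence realizing $\limsup_n d(m,x_n)$ and using that $\cos$ is continuous and decreasing on $[0,\pi]$ (so $\liminf_l \cos d(y_j,x_{n_l})\geq\cos(\limsup_n d(y_j,x_n))$); (ii) the applicability of Theorem \ref{pal}, for which you should state explicitly that you restrict to points $y$ with $\limsup_n d(y,x_n)\leq r+\varepsilon<\pi/2$, whence $d(y_j,x_n),d(y_k,x_n)<\pi/2$ for large $n$, $d(y_j,y_k)<\pi$ (so the midpoint exists in a $\pi$-geodesic space and the perimeter is $<2\pi$), and the displayed inequality then forces $d(m,x_n)<\pi/2$ as well; (iii) the division by $\cos r$, legitimate precisely because $r<\pi/2$ --- this is where the hypothesis $\limsup_n d(x_n,x)<\pi/2$ is used and where the argument would collapse for general bounded sequences in a $\CAT(1)$ space. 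The diagonal construction in your third paragraph is also sound: tails of the diagonal are subsequences of each $S_k$, giving $r(D')\geq\rho_{k+1}$ for every further subsequence $D'$, while $r(D')\leq r(D)\leq\rho_{k+1}+1/k$, so all radii equal $\rho$; combined with $\limsup_k d(z,x_{n_{i_k}})\leq\rho$ and uniqueness of centers (valid for every subsequence, since the hypothesis is inherited), this yields $\Delta$-convergence. In short: a correct, self-contained proof of a result the paper only cites.
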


\begin{lemma}[He-Fang-Lopez-Li \cite{He-Fang-Lopez-Li}]\label{conv}
Let $X$ be a complete $\CAT(1)$ space and $p\in X$. If a
sequence $\{x_n\}$ in $X$ satisfies that $\limsup_{n\to\infty} d(x_n,p)<\pi/2$ and that $\{x_n\}$ is $\Delta$-convergent to
$x\in X$, then $d(x, p)\leq \liminf_{n\to\infty}d(x_n,p)$.
\end{lemma}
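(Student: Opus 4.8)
The plan is to reduce the claim to the defining extremal property of the asymptotic center and then exploit the sharp spherical comparison inequality of Theorem \ref{pal}, rather than the coarser Corollary \ref{pall}. First I would set $r := \liminf_{n\to\infty} d(x_n,p)$ and pass to a subsequence $\{x_{n_i}\}$ with $d(x_{n_i},p)\to r$; note $r<\pi/2$ by hypothesis. Since $\{x_n\}$ is $\Delta$-convergent to $x$, so is every subsequence, hence $AC(\{x_{n_i}\})=\{x\}$. Because all the relevant distances lie below $\pi/2$ and $u\mapsto\cos u$ is continuous and strictly decreasing there, the map $g(z):=\liminf_{i\to\infty}\cos d(z,x_{n_i})$ is maximized exactly where $z\mapsto\limsup_{i\to\infty} d(z,x_{n_i})$ is minimized; thus $x$ maximizes $g$, i.e.\ $g(z)\le g(x)$ for all $z\in X$. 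Testing the minimization against the competitor $p$ gives $\limsup_{i\to\infty} d(x,x_{n_i})\le\limsup_{i\to\infty} d(p,x_{n_i})=r$, so $g(x)=\cos\bigl(\limsup_{i\to\infty}d(x,x_{n_i})\bigr)\ge\cos r>0$.

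Next I would feed the geodesic from $x$ to $p$ into Theorem \ref{pal}. Writing $D:=d(x,p)$ (the case $D=0$ being trivial) and $v_t:=tx\oplus(1-t)p$ for $t\in(0,1)$, one has $0<D<\pi$, and for all large $i$ the perimeter of $\triangle(x,p,x_{n_i})$ is below $2\pi$, since $d(p,x_{n_i})\to r$, $\limsup_i d(x,x_{n_i})\le r$, and $d(x,p)\le\limsup_i d(x,x_{n_i})+\lim_i d(p,x_{n_i})\le 2r<\pi$. Hence Theorem \ref{pal} applies with $z=x_{n_i}$ and yields $\cos d(v_t,x_{n_i})\sin D\ge\cos d(x,x_{n_i})\sin(tD)+\cos d(p,x_{n_i})\sin((1-t)D)$. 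Taking $\liminf_{i\to\infty}$ and using $d(p,x_{n_i})\to r$ gives $\sin D\,g(v_t)\ge\sin(tD)\,g(x)+\sin((1-t)D)\cos r$; substituting $g(v_t)\le g(x)$ then leaves $\bigl(\sin D-\sin(tD)\bigr)g(x)\ge\sin((1-t)D)\cos r$.

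The decisive step is to divide this inequality by $1-t$ and let $t\to 1^-$. Using $\frac{\sin D-\sin(tD)}{1-t}\to D\cos D$ and $\frac{\sin((1-t)D)}{1-t}\to D$, I obtain $D\cos D\cdot g(x)\ge D\cos r$, that is, $\cos D\cdot g(x)\ge\cos r$ after dividing by $D>0$. Since $0<g(x)\le 1$, this first forces $\cos D>0$, so $D<\pi/2$, and then $\cos D\ge\cos r/g(x)\ge\cos r$; as cosine is strictly decreasing on $[0,\pi/2)$ I conclude $d(x,p)=D\le r=\liminf_{n\to\infty}d(x_n,p)$.

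I expect the main obstacle to be recognizing that the convexity estimate of Corollary \ref{pall} is too weak here: taking $\liminf$ there only recovers $\limsup_i d(x,x_{n_i})\le r$, which says nothing about $d(x,p)$. The full weighted spherical inequality of Theorem \ref{pal} is therefore essential, together with the endpoint differentiation in $t$ that converts its $\sin$-weights into the clean relation $\cos D\cdot g(x)\ge\cos r$. A secondary technical point is the bookkeeping that keeps all relevant distances strictly below $\pi/2$ and the triangle perimeters strictly below $2\pi$, so that the comparison inequality and the cosine monotonicity are legitimately available along the subsequence.
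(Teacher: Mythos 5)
The paper does not actually prove Lemma~\ref{conv}: it is imported verbatim from He--Fang--L\'opez--Li \cite{He-Fang-Lopez-Li}, so there is no in-paper argument to compare against. Your proof is a legitimate self-contained derivation using only tools the paper already has (the asymptotic-center characterization of $\Delta$-convergence and the weighted spherical inequality of Theorem~\ref{pal}), and it is in the same spirit as the comparison argument in the cited source. The skeleton is sound: $g(v_t)\sin D\ge g(x)\sin(tD)+\cos r\,\sin((1-t)D)$ follows correctly from Theorem~\ref{pal} after taking $\liminf$ along the subsequence with $d(x_{n_i},p)\to r$ (the perimeter check $4r+2\epsilon<2\pi$ is fine), the endpoint differentiation at $t\to 1^-$ correctly yields $\cos D\cdot g(x)\ge\cos r$, and the bookkeeping $0<\cos r\le g(x)\le 1$ then forces $D<\pi/2$ and $D\le r$. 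Your remark that Corollary~\ref{pall} is too weak is also accurate: combined with $g(v_t)\le g(x)$ it only reproduces $g(x)\ge\cos r$ and says nothing about $d(x,p)$.

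One justification is genuinely wrong as stated, though the error is repairable. You claim that ``all the relevant distances lie below $\pi/2$'' and conclude that $g(z)\le g(x)$ for \emph{all} $z\in X$. Neither holds in general: a complete $\CAT(1)$ space in this lemma need not have diameter below $\pi/2$ (an $\mathbb{R}$-tree is a complete $\CAT(1)$ space), and for far-away $z$ the distances $d(z,x_{n_i})$ can cluster near $2\pi$, making $g(z)=\liminf_i\cos d(z,x_{n_i})$ close to $1$ and possibly exceeding $g(x)$; minimizing $\limsup_i d(z,x_{n_i})$ is equivalent to maximizing $g$ only where cosine is monotone. Fortunately you use $g(z)\le g(x)$ only at $z=v_t\in[x,p]$, and there it can be saved: since $d(v_t,x)=(1-t)D$, $d(v_t,p)=tD$, and $D\le 2r$, one has $d(v_t,x_{n_i})\le\min\bigl(tD,(1-t)D\bigr)+\max\bigl(d(x,x_{n_i}),d(p,x_{n_i})\bigr)\le r+(r+\epsilon)<\pi$ for all large $i$, so $\limsup_i d(v_t,x_{n_i})\in[0,\pi]$; then $g(v_t)\le\cos\bigl(\limsup_i d(v_t,x_{n_i})\bigr)\le\cos\bigl(\limsup_i d(x,x_{n_i})\bigr)=g(x)$, using that cosine is continuous and decreasing on $[0,\pi]$ and the minimality of $x$ as asymptotic center. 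With the claim restricted to the segment $[x,p]$ and justified this way, your argument is complete and correct.
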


\begin{lemma}[Kimura-Sat\^o \cite{Kimura-Sato}]\label{KS}
Let $X$ be a $\CAT(1)$ space such that $d(v,v')<\pi/2$ for every $v,v'\in X$. Let $\alpha\in[0,1]$ and $u,y,z\in X$. Then
\begin{align*}
& 1-\cos d(\beta u\oplus(1-\beta)y,z)  \\
&\leq (1-\gamma)(1-\cos d(y,z))+\gamma\left(1-\dfrac{\cos d(u,z)}{\sin d(u,y)\tan(2^{-1}\beta d(u,y))+\cos d(u,y)}\right),
\end{align*}
where
\[
\gamma:=\left\{ \begin{array}{ll}
1-\dfrac{\sin((1-\beta)d(u,y))}{\sin(\beta d(u,y))} & (u\neq y), \\
\beta & (u=y).
\end{array} \right.
\]
\end{lemma}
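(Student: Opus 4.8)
The plan is to derive the estimate as a direct consequence of the spherical comparison inequality of Theorem \ref{pal}, followed by a purely trigonometric rearrangement of the two coefficients that appear. Throughout I would write $m:=d(u,y)$ and $v:=\beta u\oplus(1-\beta)y$. Since every distance in $X$ is strictly less than $\pi/2$, the perimeter hypothesis $d(u,y)+d(y,z)+d(z,u)<2\pi$ of Theorem \ref{pal} holds automatically, and moreover $\cos d(u,z)$, $\cos d(y,z)$ and $\cos m$ are all positive. First I would dispose of the degenerate case $u=y$: then $v=y=u$, so $d(v,z)=d(y,z)=d(u,z)$, while $\gamma=\beta$ and the denominator $\sin m\tan(\beta m/2)+\cos m$ equals $1$; substituting shows both sides collapse to $1-\cos d(y,z)$, so the inequality holds with equality.

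For $u\neq y$ we have $0<m<\pi/2$, hence $\sin m>0$. Applying Theorem \ref{pal} with $x=u$ and $t=\beta$ gives
\[
\cos d(v,z)\,\sin m\geq \cos d(u,z)\sin(\beta m)+\cos d(y,z)\sin((1-\beta)m),
\]
and dividing by $\sin m>0$ yields
\[
\cos d(v,z)\geq \frac{\sin(\beta m)}{\sin m}\cos d(u,z)+\frac{\sin((1-\beta)m)}{\sin m}\cos d(y,z).
\]
By the very definition of $\gamma$ the second coefficient is already $\sin((1-\beta)m)/\sin m=1-\gamma$, so only the first coefficient requires work.

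The crux is to show that the first coefficient equals $\gamma/D$, where $D:=\sin m\tan(\beta m/2)+\cos m$. Writing $\tan(\beta m/2)=\sin(\beta m/2)/\cos(\beta m/2)$ and combining over a common denominator, the cosine addition formula gives
\[
D=\frac{\cos m\cos(\beta m/2)+\sin m\sin(\beta m/2)}{\cos(\beta m/2)}=\frac{\cos\!\big((1-\tfrac{\beta}{2})m\big)}{\cos(\beta m/2)}.
\]
Then, using $\sin(\beta m)=2\sin(\beta m/2)\cos(\beta m/2)$ together with the product-to-sum identity $2\cos((1-\tfrac{\beta}{2})m)\sin(\beta m/2)=\sin m-\sin((1-\beta)m)$, I would compute
\[
D\cdot\frac{\sin(\beta m)}{\sin m}=\frac{\sin m-\sin((1-\beta)m)}{\sin m}=\gamma,
\]
so that $\sin(\beta m)/\sin m=\gamma/D$, exactly as needed.

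Substituting both coefficients into the displayed lower bound for $\cos d(v,z)$, then splitting $1=(1-\gamma)+\gamma$ on the left, a one-line rearrangement produces
\[
1-\cos d(v,z)\leq(1-\gamma)(1-\cos d(y,z))+\gamma\Big(1-\frac{\cos d(u,z)}{D}\Big),
\]
which is precisely the claimed estimate. The only genuine obstacle is the trigonometric identity $\sin(\beta m)/\sin m=\gamma/D$; once the denominator $D$ is recognized, via the cosine addition formula, as $\cos((1-\tfrac{\beta}{2})m)/\cos(\beta m/2)$, the product-to-sum step makes the whole coefficient collapse onto $\gamma$, and the rest is bookkeeping. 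I would also record that $D>0$ throughout, since both $\cos(\beta m/2)$ and $\cos((1-\tfrac{\beta}{2})m)$ are positive (their arguments lie in $(0,\pi/2)$), so that the division performed above is legitimate.
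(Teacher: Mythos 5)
The paper does not actually prove Lemma \ref{KS}: it is imported from Kimura--Sat\^o \cite{Kimura-Sato}, so there is no in-paper argument to compare against. Taken on its own, your derivation is correct and is surely the intended one: Theorem \ref{pal} with $x=u$, $t=\beta$, division by $\sin m$, the identity $D=\sin m\tan(\beta m/2)+\cos m=\cos((1-\tfrac{\beta}{2})m)/\cos(\beta m/2)$, and the product-to-sum step $2\cos((1-\tfrac{\beta}{2})m)\sin(\beta m/2)=\sin m-\sin((1-\beta)m)$ do yield $\sin(\beta m)/\sin m=\gamma/D$ and $\sin((1-\beta)m)/\sin m=1-\gamma$, after which the rearrangement via $1=(1-\gamma)+\gamma$ is immediate; the degenerate case $u=y$, the perimeter hypothesis, and the positivity of $D$ are all handled correctly.

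The one point you must flag is that your step ``by the very definition of $\gamma$ the second coefficient is $\sin((1-\beta)m)/\sin m=1-\gamma$'' does not match the statement as printed: the paper defines $\gamma=1-\sin((1-\beta)d(u,y))/\sin(\beta d(u,y))$, with $\sin(\beta d(u,y))$ in the denominator. With that definition your coefficient matching fails, and $\gamma$ need not even lie in $[0,1]$ (take $\beta$ small). The printed denominator is a typo for $\sin d(u,y)$ --- that is what the original lemma of Kimura--Sat\^o states, it is the only choice for which your identity $\gamma/D=\sin(\beta m)/\sin m$ holds, and it is what the paper itself uses later in the proof of Theorem \ref{Main} when it bounds $\gamma_n\ge 1-\sin((1-\beta_n)M)/\sin M$ and expands this as $\tfrac{2}{\sin M}\sin(\tfrac{\beta_n}{2}M)\cos((1-\tfrac{\beta_n}{2})M)$. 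So you have proved the correct, intended statement, but you should say explicitly that you are correcting the definition of $\gamma$ rather than attributing the corrected formula to ``the very definition''; the same remark applies to the hypothesis ``$\alpha\in[0,1]$'', which should read $\beta\in[0,1]$.
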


\section{Main result}\label{sec:Main result}

We begin this section with the following useful lemma.

\begin{lemma}\label{sin}
If $\delta\in[0,\pi/2]$ satisfies
\[
\sin\delta\geq \sin(\alpha\delta)+\sin((1-\alpha)\delta)
\]
for some $\alpha\in(0,1)$, then $\delta=0$.
\end{lemma}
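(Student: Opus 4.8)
The plan is to argue by contraposition: I will show that whenever $\delta\in\openitvl{0}{\pi/2}$ or $\delta=\pi/2$, the reverse \emph{strict} inequality
\[
\sin\delta<\sin(\alpha\delta)+\sin((1-\alpha)\delta)
\]
holds for every $\alpha\in(0,1)$. Since the hypothesis asserts the opposite non-strict inequality, this forces the only remaining possibility $\delta=0$.

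First I would introduce the abbreviations $a:=\alpha\delta$ and $b:=(1-\alpha)\delta$, so that $a+b=\delta$. Assuming $\delta>0$ and $\alpha\in(0,1)$, both $a$ and $b$ are strictly positive; moreover, since $\alpha<1$ and $1-\alpha<1$, we have $a<\delta\le\pi/2$ and $b<\delta\le\pi/2$. Hence $a,b\in\openitvl{0}{\pi/2}$, which is the range on which I will exploit the behaviour of $\sin$ and $\cos$.

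Next I would expand $\sin\delta=\sin(a+b)$ by the angle-addition formula $\sin(a+b)=\sin a\cos b+\cos a\sin b$ and rewrite the relevant difference as
\[
\sin(\alpha\delta)+\sin((1-\alpha)\delta)-\sin\delta=\sin a\,(1-\cos b)+\sin b\,(1-\cos a).
\]
Because $a,b\in\openitvl{0}{\pi/2}$, the factors $\sin a$ and $\sin b$ are strictly positive, and the factors $1-\cos a$ and $1-\cos b$ are strictly positive as well; therefore the right-hand side is strictly positive. This yields $\sin\delta<\sin(\alpha\delta)+\sin((1-\alpha)\delta)$, contradicting the assumed inequality, and so $\delta=0$.

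The computation is elementary, so I do not expect a serious obstacle; the only point requiring care is securing the \emph{strict} positivity of $1-\cos a$ and $1-\cos b$. This is precisely where the standing assumptions are used: the bound $\delta\le\pi/2$ together with $\alpha\in(0,1)$ keeps both $a$ and $b$ strictly inside $\openitvl{0}{\pi/2}$, where the cosine is strictly less than $1$. The degenerate situations in which an angle collapses to $0$ (namely $\alpha\in\{0,1\}$ or $\delta=0$) are exactly those excluded by the hypotheses, and $\delta=0$ is the conclusion we are after.
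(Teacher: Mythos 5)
Your proof is correct: the identity $\sin a+\sin b-\sin(a+b)=\sin a\,(1-\cos b)+\sin b\,(1-\cos a)$ is right, and the strict positivity of each factor for $a,b\in\left]0,\pi/2\right[$ is exactly what the hypotheses $\delta\in\left]0,\pi/2\right]$ and $\alpha\in(0,1)$ guarantee. The paper itself only says the lemma ``is obtained by an elementary calculation,'' and your argument is a complete and careful instance of precisely that calculation, so there is nothing further to compare.
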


\begin{proof}
It is obtained by an elementary calculation.
\end{proof}

Next we study the set of fixed points of a $W$-mapping.

\begin{proposition}\label{FixedPoint}
Let $X$ be a $\CAT(1)$ space. Let $T_1,T_2,\ldots,T_r$ be quasinonexpansive mappings of X into itself such that $\bigcap_{i=1}^rF(T_i)\neq\emptyset$ and let $\alpha_1,\alpha_2,\ldots,\alpha_r$ be real numbers such that $0<\alpha_i<1$ for every $i=1,2,\ldots,r$. Let $W$ be the $W$-mappig of $X$ into itself generated by $T_1,T_2,\ldots,T_r$ and $\alpha_1,\alpha_2,\ldots,\alpha_r$. Then, $F(W)=\bigcap_{i=1}^rF(T_i)$.
\end{proposition}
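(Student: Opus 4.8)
The plan is to prove the two inclusions separately. The inclusion $\bigcap_{i=1}^r F(T_i)\subseteq F(W)$ is the routine one: if $p\in\bigcap_{i=1}^r F(T_i)$, I would show by induction on $k$ that $U_k p=p$. Indeed $U_1 p=\alpha_1 T_1 p\oplus(1-\alpha_1)p=\alpha_1 p\oplus(1-\alpha_1)p=p$, and if $U_{k-1}p=p$ then $T_k U_{k-1}p=T_k p=p$, whence $U_k p=\alpha_k p\oplus(1-\alpha_k)p=p$. Taking $k=r$ gives $Wp=p$, so $p\in F(W)$.

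For the reverse inclusion $F(W)\subseteq\bigcap_{i=1}^r F(T_i)$, fix $x\in F(W)$ and some $p\in\bigcap_{i=1}^r F(T_i)$, and abbreviate $a_k:=\cos d(U_k x,p)$, so that $a_0=\cos d(x,p)$ and $a_r=\cos d(Wx,p)=a_0$. Since $U_k x=\alpha_k(T_k U_{k-1}x)\oplus(1-\alpha_k)x$, Corollary \ref{pall} gives $a_k\geq \alpha_k\cos d(T_k U_{k-1}x,p)+(1-\alpha_k)a_0$, and quasinonexpansiveness of $T_k$ together with monotonicity of $\cos$ gives $\cos d(T_k U_{k-1}x,p)\geq \cos d(U_{k-1}x,p)=a_{k-1}$. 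Hence $a_k\geq\alpha_k a_{k-1}+(1-\alpha_k)a_0$. A forward induction yields $a_k\geq a_0$ for all $k$, and feeding $a_r=a_0$ back through this recursion (using $\alpha_k\in(0,1)$) forces, by a downward induction, the equalities $a_k=a_0$ for every $k$, and with them $\cos d(T_k U_{k-1}x,p)=a_0$.

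With all of these distances to $p$ now equal to $d(x,p)$, I would extract the geometric collapse. Writing $\delta_k:=d(T_k U_{k-1}x,x)$ and applying Theorem \ref{pal} to $U_k x=\alpha_k(T_k U_{k-1}x)\oplus(1-\alpha_k)x$ with third point $p$, every cosine that appears equals $a_0=\cos d(x,p)$, so after dividing by $a_0$ one obtains $\sin\delta_k\geq\sin(\alpha_k\delta_k)+\sin((1-\alpha_k)\delta_k)$; Lemma \ref{sin} then forces $\delta_k=0$, i.e. $T_k U_{k-1}x=x$. Running this from $k=1$ upward, $U_0 x=x$ gives $T_1 x=x$ and hence $U_1 x=\alpha_1 x\oplus(1-\alpha_1)x=x$; inductively $U_{k-1}x=x$ gives $T_k x=T_k U_{k-1}x=x$ and $U_k x=x$, so $x\in\bigcap_{i=1}^r F(T_i)$. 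The delicate point, and the step I expect to be the main obstacle, is exactly this last one: dividing by $a_0$ and invoking Lemma \ref{sin} requires $\cos d(x,p)>0$ and $\delta_k\in[0,\pi/2]$, i.e. control of $d(x,p)$ and of $d(T_k U_{k-1}x,x)$ below $\pi/2$ (which also guarantees the perimeter bound needed by Theorem \ref{pal} and Corollary \ref{pall}). Some control of these magnitudes is genuinely necessary, since the borderline case $d(x,p)=\pi/2$ makes $a_0=0$ and renders the division illegitimate; this is where the ambient smallness hypotheses under which the iteration is studied enter.
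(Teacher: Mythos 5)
Your argument is correct and follows essentially the same route as the paper: both prove the easy inclusion directly, then use Corollary \ref{pall} and quasinonexpansiveness to squeeze all the quantities $\cos d(U_kx,p)$ and $\cos d(T_kU_{k-1}x,p)$ into equality with $\cos d(x,p)$, and finally invoke Theorem \ref{pal} together with Lemma \ref{sin} to collapse each $d(T_kU_{k-1}x,x)$ to zero. The delicate point you flag (positivity of $\cos d(x,p)$ and the range restrictions needed for Lemma \ref{sin} and Theorem \ref{pal}) is likewise left implicit in the paper's own proof, which states the proposition for a general $\CAT(1)$ space but only ever applies it under the standing hypothesis $d(v,v')<\pi/2$.
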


\begin{proof}
It is obvious that $\bigcap_{i=1}^rF(T_i)\subset F(W)$. So, we shall prove $F(W)\subset\bigcap_{i=1}^rF(T_i)$. Let $z\in F(W)$ and $w\in \bigcap_{i=1}^rF(T_i)$. Then it follows that
\[
0=d(z,z)=d(Wz,z)=d(\alpha_rT_rU_{r-1}z\oplus(1-\alpha_r)z,z)=\alpha_rd(z,T_rU_{r-1}z).
\]
Since $0<\alpha_r\leq 1$, we obtain $z=T_rU_{r-1}z$ and hence
\begin{align*}
\cos d(z,w) &= \cos d(T_rU_{r-1}z,w) \\
&\geq \cos d(U_{r-1}z,w) \\
&= \cos d(\alpha_{r-1}T_{r-1}U_{r-2}z\oplus(1-\alpha_{r-1})z,w) \\
&\geq \alpha_{r-1}\cos d(T_{r-1}U_{r-2}z,w)+(1-\alpha_{r-1})\cos d(z,w) \\
&\geq \alpha_{r-1}\cos d(U_{r-2}z,w)+(1-\alpha_{r-1})\cos d(z,w) \\
&\geq \alpha_{r-1}\cos d(\alpha_{r-2}T_{r-2}U_{r-3}z\oplus(1-\alpha_{r-2})z,w)+(1-\alpha_{r-1})\cos d(z,w) \\
& \geq \alpha_{r-1}\alpha_{r-2}\cos d(T_{r-2}U_{r-3}z,w)+(1-\alpha_{r-1}\alpha_{r-2})\cos d(z,w) \\
&\geq \cdots \\
&\geq \alpha_{r-1}\alpha_{r-2}\cdots\alpha_2\cos d(T_2U_1z,w)+(1-\alpha_{r-1}\alpha_{r-2}\cdots\alpha_2)\cos d(z,w) \\
&\geq \alpha_{r-1}\alpha_{r-2}\cdots\alpha_2\cos d(U_1z,w)+(1-\alpha_{r-1}\alpha_{r-2}\cdots\alpha_2)\cos d(z,w) \\
&\geq \alpha_{r-1}\alpha_{r-2}\cdots\alpha_2\cos d(\alpha_1T_1z\oplus(1-\alpha_1)z,w)+(1-\alpha_{r-1}\alpha_{r-2}\cdots\alpha_2)\cos d(z,w) \\
&\geq \alpha_{r-1}\alpha_{r-2}\cdots\alpha_2\alpha_1\cos d(T_1z,w)+(1-\alpha_{r-1}\alpha_{r-2}\cdots\alpha_2\alpha_1)\cos d(z,w) \\
&\geq \cos d(z,w).
\end{align*}
Then it follows that
\[
d(z,w)=\cos d(T_1z,w)=d(U_1z,w)=d(\alpha_1T_1z\oplus(1-\alpha_1)z,w).
\]
By Theorem \ref{pal} and Lemma \ref{sin} with
\begin{align*}
& \cos d(\alpha_1T_1z\oplus(1-\alpha_1)z,w)\sin d(T_1z,z) \\
&\geq \cos d(T_1z,w)\sin(\alpha_1d(T_1z,z))+\cos d(z,w)\sin((1-\alpha_1)d(T_1z,z)),
\end{align*}
we obtain $T_1z=z$. Similarly, we have
\[
d(z,w)=d(T_2U_1z,w)=d(U_2z,w)=d(\alpha_2T_2U_1z\oplus(1-\alpha_2)z,w).
\]
By Theorem \ref{pal} and Lemma \ref{sin} with
\begin{align*}
& \cos d(\alpha_2T_2U_1z\oplus(1-\alpha_2)z,w)\sin d(T_2U_1z,z) \\
&\geq \cos d(T_2U_1z,w)\sin(\alpha_2d(T_2U_1z,z))+\cos d(z,w)\sin((1-\alpha_2)d(T_2U_1z,z)),
\end{align*}
we obtain $T_2U_1z=z$. Since $U_1z=z$, we obtain $T_2z=z$. Using such techniques, we obtain $T_iz=z$ and $U_iz=z$ for all $i=1,2,\ldots,r$, and hence $z\in\bigcap_{i=1}^rF(T_i)$. This implies $F(W)\subset \bigcap_{i=1}^rF(T_i)$. Therefore we have $F(W)=\bigcap_{i=1}^rF(T_i)$.
\end{proof}

\begin{remark}\normalfont
Let $W_n$ be the $W$-mappings of $X$ into itself generated by $T_1,T_2,\ldots,T_r$ and $\alpha_{n,1},\alpha_{n,2},\ldots,\alpha_{n,r}$ for $n\in\mathbb{N}$. By Proposition \ref{FixedPoint}, all the sets of fixed points $\{F(W_n)\}$ is identical.
\end{remark}

The following Lemma \ref{Strongly quasinonexpansive sequence} is essentially given by Kasahara \cite{Kasahara}. For the sake of completeness, we give the proof.

\begin{lemma}[Kasahara \cite{Kasahara}]\label{Strongly quasinonexpansive sequence}
Let $\{S_n\}$ be a sequence of quasinonexpansive mappings of a $\CAT(1)$ space $X$ into itself such that $\bigcap_{n=1}^\infty F(S_n)\neq\emptyset$. Then for given real numbers $\alpha_n\in[a,1-a]\subset(0,1)$ and $p\in\bigcap_
{n=1}^\infty F(S_n)$, if $\{x_n\}$ satisfies that $\sup_{n\in\mathbb{N}}d(x_n,p)<\pi/2$ and
\[
\lim_{n\to\infty}\dfrac{\cos d(x_n,p)}{\cos d(\alpha_nS_nx_n\oplus(1-\alpha_n)x_n,p)}=1,
\]
then $\lim_{n\to\infty}d(S_nx_n,x_n)=0$.
\end{lemma}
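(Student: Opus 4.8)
The plan is to convert the cosine-ratio hypothesis into a comparison between the sines of $\delta_n:=d(S_nx_n,x_n)$ by means of the spherical estimate in Theorem~\ref{pal}, and then to force $\delta_n\to 0$ through Lemma~\ref{sin}. Put $y_n:=\alpha_nS_nx_n\oplus(1-\alpha_n)x_n$, so that the hypothesis reads $\cos d(x_n,p)/\cos d(y_n,p)\to 1$. First I would verify the perimeter condition required by the comparison theorem: since $d(x_n,p)<\pi/2$ and, by quasinonexpansivity of $S_n$, $d(S_nx_n,p)\le d(x_n,p)<\pi/2$, the triangle inequality gives
\[
d(S_nx_n,x_n)+d(x_n,p)+d(p,S_nx_n)\le 2\bigl(d(S_nx_n,p)+d(x_n,p)\bigr)<2\pi,
\]
so Theorem~\ref{pal} applies with $x=S_nx_n$, $y=x_n$, $z=p$ and $t=\alpha_n$.

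This yields
\[
\cos d(y_n,p)\sin\delta_n\ge \cos d(S_nx_n,p)\sin(\alpha_n\delta_n)+\cos d(x_n,p)\sin\bigl((1-\alpha_n)\delta_n\bigr).
\]
Because $0<\cos d(x_n,p)\le\cos d(S_nx_n,p)$ (both distances lie in $[0,\pi/2)$, where cosine is decreasing) and $\sin(\alpha_n\delta_n)\ge 0$, I would weaken the right-hand side by replacing $\cos d(S_nx_n,p)$ with the smaller $\cos d(x_n,p)$ and then divide by $\cos d(x_n,p)>0$, obtaining
\[
r_n\sin\delta_n\ge \sin(\alpha_n\delta_n)+\sin\bigl((1-\alpha_n)\delta_n\bigr),\qquad r_n:=\frac{\cos d(y_n,p)}{\cos d(x_n,p)}.
\]
By hypothesis $1/r_n\to 1$, hence $r_n\to 1$; moreover $r_n\ge 1$ by Corollary~\ref{pall}.

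To conclude I would argue by contradiction. If $\delta_n\not\to 0$, choose a subsequence with $\delta_{n_k}\ge\varepsilon>0$; since $\alpha_{n_k}\in[a,1-a]$ and $\delta_{n_k}\in[\varepsilon,\pi)$ are bounded, a further refinement gives $\alpha_{n_k}\to\alpha_*\in[a,1-a]\subset(0,1)$ and $\delta_{n_k}\to\delta_*\in[\varepsilon,\pi]$. Passing to the limit in the displayed inequality, using $r_{n_k}\to 1$ and continuity, produces
\[
\sin\delta_*\ge \sin(\alpha_*\delta_*)+\sin\bigl((1-\alpha_*)\delta_*\bigr),
\]
which by Lemma~\ref{sin} forces $\delta_*=0$, contradicting $\delta_*\ge\varepsilon$. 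Hence $\delta_n=d(S_nx_n,x_n)\to 0$. Here the interval $[a,1-a]$ is exactly what keeps $\alpha_*$ away from $0$ and $1$, so that Lemma~\ref{sin} is applicable.

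The step I expect to be delicate is this last passage to the limit. Lemma~\ref{sin} is stated only on $[0,\pi/2]$, whereas the limit $\delta_*$ may a priori fall in $(\pi/2,\pi]$, because the hypothesis controls $d(x_n,p)$ but not the full diameter of $X$. I would close this gap by recording the identity $\sin(\alpha\delta)+\sin\bigl((1-\alpha)\delta\bigr)=2\sin(\delta/2)\cos\bigl((2\alpha-1)\delta/2\bigr)$, which shows that $\sin(\alpha\delta)+\sin((1-\alpha)\delta)\ge\sin\delta$ holds throughout $[0,\pi]$ with equality only at $\delta=0$ when $\alpha\in(0,1)$; this extends Lemma~\ref{sin} to $[0,\pi]$ and makes the contradiction legitimate. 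Alternatively, one can avoid subsequences altogether: the same identity provides a fixed continuous gap $h_a(\delta)>0$ on $(0,\pi]$ with $(r_n-1)\sin\delta_n\ge h_a(\delta_n)$, and since $r_n\to 1$ forces $h_a(\delta_n)\to 0$, the positivity of $h_a$ on each $[\varepsilon,\pi]$ gives $\delta_n\to 0$ directly.
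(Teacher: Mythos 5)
Your proposal is correct, and its first half coincides with the paper's own argument: both apply Theorem~\ref{pal} with $x=S_nx_n$, $y=x_n$, $z=p$, $t=\alpha_n$, use quasinonexpansiveness (so that $\cos d(S_nx_n,p)\ge\cos d(x_n,p)>0$) to replace $\cos d(S_nx_n,p)$ by the smaller factor, and reduce to $\cos d(y_n,p)\sin\delta_n\ge\cos d(x_n,p)\bigl(\sin(\alpha_n\delta_n)+\sin((1-\alpha_n)\delta_n)\bigr)$ with $y_n:=\alpha_nS_nx_n\oplus(1-\alpha_n)x_n$ and $\delta_n:=d(S_nx_n,x_n)$. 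Where you diverge is in how $\delta_n\to 0$ is extracted. The paper stays quantitative: it rewrites the right-hand side as $2\cos d(x_n,p)\sin(\delta_n/2)\cos((2\alpha_n-1)\delta_n/2)$, divides by $2\sin(\delta_n/2)$ and then by $\cos((1-2a)\delta_n/2)$, and uses the angle-addition formula to land on the explicit bound $\cos(a\delta_n)\ge\cos d(x_n,p)/\cos d(y_n,p)\to 1$, which gives $\delta_n\to 0$ at once; Lemma~\ref{sin} is never invoked there (the paper uses it only in Proposition~\ref{FixedPoint}). You instead pass to the ratio $r_n=\cos d(y_n,p)/\cos d(x_n,p)\to 1$ and close by compactness and contradiction via Lemma~\ref{sin}. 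You rightly flag that the lemma as stated covers only $[0,\pi/2]$ while $\delta_n$ may approach $\pi$, and your repair through the identity $\sin(\alpha\delta)+\sin((1-\alpha)\delta)=2\sin(\delta/2)\cos((2\alpha-1)\delta/2)$ is valid and is in fact the very identity the paper exploits; your ``uniform gap'' variant $h_a(\delta)=2\sin(\delta/2)\bigl(\cos((1-2a)\delta/2)-\cos(\delta/2)\bigr)$ is essentially a soft form of the paper's explicit estimate. The trade-off: the paper's computation yields an explicit modulus for $\delta_n$ in terms of the cosine ratio, while your main argument is shorter to state but nonconstructive; both are complete proofs.
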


\begin{proof}
Let $\delta_n:=d(S_nx_n,x_n)$. Assume that $\{x_n\}\subset X$ and $p\in \bigcap_{n=1}^\infty F(S_n)$ such that $\sup_{n\in\mathbb{N}}d(x_n,p)<\pi/2$ and $\lim_{n\to\infty}(\cos d(x_n,p)/\cos d(\alpha_nS_nx_n\oplus(1-\alpha_n)x_n,p))=1$, by Theorem \ref{pal}, we have
\begin{align*}
& \cos d(\alpha_nS_nx_n\oplus(1-\alpha_n)x_n,p)\sin d(S_nx_n,x_n) \\
&\geq \cos d(S_nx_n,p)\sin(\alpha d(S_nx_n,x_n))+\cos d(x_n,p)\sin((1-\alpha_n)d(S_nx_n,x_n)) \\
&\geq \min\{ \cos d(S_nx_n,p),\cos d(x_n,p)\}(\sin(\alpha_n d(S_nx_n,x_n))+\sin((1-\alpha_n)d(S_nx_n,x_n))) \\
&=2\cos d(x_n,p)\sin\dfrac{d(S_nx_n,x_n)}{2}\cos \dfrac{(2\alpha_n-1)d(S_nx_n,x_n)}{2}.
\end{align*}
Hence
\[
\cos d(\alpha_nS_nx_n\oplus(1-\alpha_n)x_n,p)\sin\delta_n\geq 2\cos d(x_n,p)\sin\dfrac{\delta_n}{2}\cos \dfrac{(2\alpha_n-1)\delta_n}{2}.
\]
We assume that $\delta_n\neq 0$. Dividing above by $2\sin(\delta_n/2)$, we have
\begin{align*}
\cos d(\alpha_nS_nx_n\oplus(1-\alpha_n)x_n,p)\cos\dfrac{\delta_n}{2} &\geq \cos d(x_n,p)\cos\dfrac{(2\alpha_n-1)\delta_n}{2} \\
&\geq \cos d(x_n,p)\cos\dfrac{(1-2a)\delta_n}{2}.
\end{align*}
Moreover, dividing above by $\cos((1-2a)\delta_n/2)$, we have
\[
\cos d(x_n,p) \leq \cos d(\alpha S_nx_n\oplus(1-\alpha_n)x_n,p)\dfrac{\cos\dfrac{\delta_n}{2}}{\cos\dfrac{(1-2a)\delta_n}{2}}.
\]
Then
\begin{align*}
& \cos d(x_n,p) \\
&\leq \cos d(\alpha_n S_nx_n\oplus(1-\alpha_n)x_n,p)\dfrac{\cos\dfrac{(1-2a)\delta_n}{2}\cos(a\delta_n)-\sin\dfrac{(1-2a)\delta_n}{2}\sin(a\delta_n)}{\cos\dfrac{(1-2a)\delta_n}{2}} \\
&\leq \cos d(\alpha_n S_nx_n\oplus(1-\alpha_n)x_n,p)\cos(a \delta_n).
\end{align*}
Thus we have that
\[
\cos d(a \delta_n)\geq \dfrac{\cos d(x_n,p)}{\cos d(\alpha_n S_nx_n\oplus(1-\alpha_n)x_n,p)}\to 1\ (n\to\infty),
\]
which implies $\lim_{n\to\infty}\delta_n=0$, that is, $\lim_{n\to\infty}d(S_nx_n,x_n)=0$.
\end{proof}

\begin{theorem}\label{Main}
Let $X$ be a complete ${\rm CAT(1)}$ space such that $d(v,v')<\pi/2$ for every $v,v'\in X$. Let $T_1,T_2,\ldots,T_r$ be a finite number of quasinonexpansive and $\Delta$-demiclosed mappings of X into itself such that $F:=\bigcap_{i=1}^rF(T_i)\neq\emptyset$, and let $\alpha_{n,1},a_{n,2},\ldots,\alpha_{n,r}$ be real numbers for $n\in\mathbb{N}$ such that $\alpha_{n,i}\in[a,1-a]$ for every $i=1,2,\ldots,r$, where $0<a<1/2$. Let $W_n$ be the W-mappings of $X$ into itself generated by $T_1,T_2,\ldots,T_r$ and $\alpha_{n,1},\alpha_{n,2},\ldots,\alpha_{n,r}$ for $n\in\mathbb{N}$. Let $\{\beta_n\}$ be a sequence of real numbers such that $0<\beta_n<1$ for every $n\in\mathbb{N}, \lim_{n\to\infty}\beta_n=0$ and $\sum_{n=1}^\infty\beta_n=\infty$. For a given points $u,x_1\in X$, let $\{x_n\}$ be a sequence in $X$ generated by
\[
x_{n+1}=\beta_nu\oplus(1-\beta_n)W_nx_n
\]
for $n\in\mathbb{N}$. Suppose that one of the following conditions holds$:$
\begin{enumerate}
\item[(a)] $\sup_{v,v'\in X}d(v,v')<\pi/2;$
\item[(b)] $d(u,P_Fu)<\pi/4$ and $d(u,P_Fu)+d(x_1,P_Fu)<\pi/2;$
\item[(c)] $\sum_{n=1}^\infty\beta_n^2=\infty.$
\end{enumerate}
Then $\{x_n\}$ converges to $P_Fu$.
\end{theorem}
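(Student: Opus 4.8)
The plan is to set $p:=P_Fu$ and $s_n:=1-\cos d(x_n,p)\ge 0$, and to force $s_n\to 0$ (equivalently $d(x_n,p)\to 0$) by means of the Saejung--Yotkaew lemma (Lemma \ref{bankoku}). First I would record the boundedness. By Proposition \ref{FixedPoint} together with Lemma \ref{quasinonexpansive}, each $W_n$ is quasinonexpansive with $F(W_n)=F$, so $\cos d(W_nx_n,p)\ge\cos d(x_n,p)$. Combining this with Corollary \ref{pall} applied to $x_{n+1}=\beta_nu\oplus(1-\beta_n)W_nx_n$ gives $\cos d(x_{n+1},p)\ge\beta_n\cos d(u,p)+(1-\beta_n)\cos d(x_n,p)$, whence an immediate induction yields $\cos d(x_n,p)\ge\min\{\cos d(u,p),\cos d(x_1,p)\}>0$ and $d(x_n,p)\le\max\{d(u,p),d(x_1,p)\}$; in particular $\sup_nd(x_n,p)<\pi/2$. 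Under (a) this holds trivially, and under (b) the bound on $d(x_n,p)$ together with the two inequalities in (b) upgrades this to $\sup_nd(x_n,u)<\pi/2$.

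Next I would apply Lemma \ref{KS} with $\beta=\beta_n$, $y=W_nx_n$, $z=p$. Writing $d_n:=d(u,W_nx_n)$, denoting by $\gamma_n\in(0,1)$ the resulting coefficient and by $t_n$ the resulting right-hand term, and using $\cos d(W_nx_n,p)\ge\cos d(x_n,p)$ once more, I obtain
\[
s_{n+1}\le(1-\gamma_n)s_n+\gamma_nt_n,\qquad t_n\le 1.
\]
To invoke Lemma \ref{bankoku} two things must be verified. The first is $\sum_n\gamma_n=\infty$, and this is exactly where the trichotomy enters. From the explicit form of $\gamma_n$ and the concavity estimate $\sin(\beta_nd_n)\ge\beta_n\sin d_n$ one derives both $\gamma_n\ge\beta_n\cos d_n$ and $\gamma_n\ge 1-\cos(\beta_nd_n)$. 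Under (a) or (b) the distance $d_n$ is bounded away from $\pi/2$, so $\gamma_n\ge c\beta_n$ and $\sum\gamma_n=\infty$ follows from $\sum\beta_n=\infty$; under (c) a short case split on whether $\cos d_n\ge 1/2$ merges the two bounds into $\gamma_n\ge c'\beta_n^2$, so $\sum\gamma_n=\infty$ follows from $\sum\beta_n^2=\infty$.

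The second, and principal, task is the subsequence hypothesis of Lemma \ref{bankoku}: for every $\{n_j\}$ with $\liminf_j(s_{n_j+1}-s_{n_j})\ge 0$ one must show $\limsup_jt_{n_j}\le 0$. Since $s_n':=1-\cos d(W_nx_n,p)\le s_n$ and $t_n\le 1$, rearranging the recursion along such a subsequence and using $\gamma_{n_j}\to 0$ forces $\cos d(x_{n_j},p)-\cos d(W_{n_j}x_{n_j},p)\to 0$. Feeding this into the telescoped form of Corollary \ref{pall} for the layers $U_i$ of the $W$-mapping — which gives $0\le\cos d(U_ix_{n_j},p)-\cos d(x_{n_j},p)\le a^{-(r-i)}(\cos d(W_{n_j}x_{n_j},p)-\cos d(x_{n_j},p))$ — yields $\cos d(x_{n_j},p)/\cos d(U_ix_{n_j},p)\to 1$ for each $i$. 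Lemma \ref{Strongly quasinonexpansive sequence}, applied with $S_n=T_iU_{i-1}$ (quasinonexpansive with $p$ a fixed point) and $\alpha_{n,i}\in[a,1-a]$, then gives $d(T_iU_{i-1}x_{n_j},x_{n_j})\to 0$, hence $d(U_ix_{n_j},x_{n_j})\to 0$ for all $i$, and in particular $d(W_{n_j}x_{n_j},x_{n_j})\to 0$. To finish, suppose $\limsup_jt_{n_j}>0$ and pass to a subsequence $\{m_k\}$ realizing it; along it $d(u,W_{m_k}x_{m_k})$ (equivalently $d(u,x_{m_k})$, since the two differ by $o(1)$) stays bounded away from $\pi/2$, for otherwise $t_{m_k}\to-\infty$ along a further subsequence. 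Then Lemma \ref{subseq} extracts a $\Delta$-convergent sub-subsequence with limit $w$. Because $d(U_{i-1}x_{m_k},x_{m_k})\to 0$, the shifted sequences $\{U_{i-1}x_{m_k}\}$ share the $\Delta$-limit $w$ and satisfy $d(T_iU_{i-1}x_{m_k},U_{i-1}x_{m_k})\to 0$, so $\Delta$-demiclosedness of each $T_i$ gives $w\in F$. Finally Lemma \ref{conv} with base point $u$ (licensed by the bound just obtained) yields $d(u,p)\le d(u,w)\le\liminf_kd(u,x_{m_k})$, which forces $\limsup_kt_{m_k}\le 0$, contradicting the choice of $\{m_k\}$. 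Hence $\limsup_jt_{n_j}\le 0$, Lemma \ref{bankoku} applies, and $d(x_n,p)\to 0$.

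The hard part will be the subsequence step: organizing the asymptotic regularity through the nested layers $U_1,\dots,U_r$ of the $W$-mapping, and obtaining $\Delta$-demiclosedness control despite the $T_i$ being merely quasinonexpansive, hence possibly discontinuous. The device that makes this work — and the point I would be most careful about — is to apply $\Delta$-demiclosedness not to $\{x_{n_j}\}$ directly but to the shifted sequences $\{U_{i-1}x_{n_j}\}$, along which $d(T_iU_{i-1}x_{n_j},U_{i-1}x_{n_j})\to 0$ holds with no continuity of $T_i$ required. A secondary delicate point is the uniform lower bound on $\gamma_n$ under (c), where $d(u,W_nx_n)$ may approach $\pi/2$ and $\gamma_n$ degrades from order $\beta_n$ to order $\beta_n^2$.
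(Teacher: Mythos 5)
Your proposal is correct and follows essentially the same route as the paper: the Saejung--Yotkaew lemma applied to $s_n=1-\cos d(x_n,p)$ with $\gamma_n,t_n$ from Lemma \ref{KS}, the three-case estimate for $\sum\gamma_n=\infty$, layer-by-layer asymptotic regularity through the $U_i$ via Lemma \ref{Strongly quasinonexpansive sequence}, and $\Delta$-demiclosedness applied to the shifted sequences $\{U_{i-1}x_{n_j}\}$ followed by Lemma \ref{conv}. The only differences are presentational (your telescoped inequality in place of the paper's explicit induction, and a proof by contradiction where the paper directly evaluates $\limsup_j t_{n_j}$ along a subsequence realizing $\liminf_j d(u,x_{n_j})$).
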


\begin{proof}
Let $p:=P_Fu$ and let
\begin{align*}
s_n &:= 1-\cos d(x_n,p), \\
t_n &:= 1-\dfrac{\cos d(u,p)}{\sin d(u,W_nx_n)\tan(2^{-1}\beta_nd(u,W_nx_n))+\cos d(u,W_nx_n)}, \\
\gamma_n &:= \left\{ \begin{array}{ll}
1-\dfrac{\sin((1-\beta_n)d(u,W_nx_n))}{\sin(\beta_nd(u,W_nx_n))} & (u\neq W_nx_n), \\
\beta_n & (u=W_nx_n)
\end{array} \right.
\end{align*}
for $n\in\mathbb{N}$. If $\{s_n\},\{t_n\}$ and $\{\gamma_n\}$ satisfy the conditions of Lemma \ref{bankoku}, then we will have $\lim_{n\to\infty}s_n=0$, that is, $\{x_n\}$ converges to $p=P_Fu$. Thus the proof of Theorem \ref{Main} will be completed. First, it is obvious that $s_n\geq 0$. By Lemma \ref{quasinonexpansive}, $W_n$ is quasinonexpansive. Then, it follows from Lemma \ref{KS} that
\[
s_{n+1}\leq(1-\gamma_n)(1-\cos d(W_nx_n,p))+\gamma_nt_n\leq(1-\gamma_n)s_n+\gamma_nt_n
\]
for every $n\in\mathbb{N}$. Now, it is also obvious that $\{\gamma_n\}$ is a sequence in $(0,1)$. we show that $\sum_{n=1}^\infty\gamma_n=\infty$ holds under each condition (a),(b) and (c). We have
\begin{align*}
\cos d(x_{n+1},p) &= \cos d(\beta_nu\oplus(1-\beta_n)W_nx_n,p) \\
&\geq \beta_n\cos d(u,p)+(1-\beta_n)\cos d(W_nx_n,p) \\
&\geq \beta_n\cos d(u,p)+(1-\beta_n)\cos d(x_n,p) \\
&\geq \min\{\cos d(u,p),\cos d(x_n,p)\}
\end{align*}
for all $n\in\mathbb{N}$. Thus we have
\begin{align*}
\cos d(x_n,p) &\geq \min\{\cos d(u,p),\cos d(x_1,p)\} \\
&= \cos\max\{d(u,p),d(x_1,p)\} \\
&>0
\end{align*}
for all $n\in\mathbb{N}$ and hence $\sup_{n\in\mathbb{N}}d(x_n,p)\leq\max\{d(u,p),d(x_1,p)\}<\pi/2$. For the case of (a) and (b), let $M=\sup_{n\in\mathbb{N}}d(u,W_nx_n)$. Then we show that $M<\pi/2$. For (a), it is trivial. For (b), since $\sup_{n\in\mathbb{N}}d(x_n,p)\leq\max\{d(u,p),d(x_1,p)\}$, we have
\begin{align*}
M &= \sup_{n\in\mathbb{N}}d(u,W_nx_n) \\
&\leq \sup_{n\in\mathbb{N}}(d(u,p)+d(p,W_nx_n)) \\
&\leq \sup_{n\in\mathbb{N}}(d(u,p)+d(p,x_n)) \\
&\leq \max\{2d(u,p),d(u,p)+d(x_1,p)\} \\
&<\dfrac{\pi}{2}.
\end{align*}
Thus, in each case of (a) and (b), we have
\begin{align*}
\gamma_n &\geq 1-\dfrac{\sin((1-\beta_n)M)}{\sin M} \\
&=\dfrac{2}{\sin M}\sin\left(\dfrac{\beta_n}{2}M\right)\cos\left(\left(1-\dfrac{\beta_n}{2}\right)M\right) \\
&\geq \beta_n\cos M.
\end{align*}
Since $\sum_{n=1}^\infty\beta_n=\infty$, it follows that $\sum_{n=1}^\infty\gamma_n=\infty$. For the case of (c), we have
\[
\gamma_n\geq 1-\sin\dfrac{(1-\beta_n)\pi}{2}=1-\cos\dfrac{\beta_n\pi}{2}\geq\dfrac{\beta_n^2\pi^2}{16}
\]
for every $n\in\mathbb{N}$. Therefore, in the case of (c) we also have $\sum_{n=1}^\infty\gamma_n=\infty$. Finally, we show that $\limsup_{j\to\infty}t_{n_j}\leq 0$ for any subsequence $\{n_j\}$ of $\mathbb{N}$ with $\liminf_{j\to\infty}(s_{n_j+1}-s_{n_j})\geq 0$.  Let $\{s_{n_j}\}$ be a subsequence of $\{s_n\}$ satisfying that $\liminf_{j\to\infty}(s_{n_j+1}-s_{n_j})\geq 0$, and put
\[
\alpha:=\min_{k=1,\ldots,r}\left(\inf_{n\in\mathbb{N}}\alpha_{n,k}\right).
\]
Then we have
\begin{align*}
0 &\leq \liminf_{j\to\infty}(s_{n_j+1}-s_{n_j}) \\
&= \liminf_{j\to\infty}(\cos d(x_{n_j},p)-\cos d(x_{n_j+1},p)) \\
&= \liminf_{j\to\infty}(\cos d(x_{n_j},p)-\cos d(\beta_{n_j}u\oplus(1-\beta_{n_j})W_{n_j}x_{n_j},p)) \\
&\leq \liminf_{j\to\infty}(\cos d(x_{n_j},p)-(\beta_{n_j}\cos d(u,p)+(1-\beta_{n_j})\cos d(W_{n_j}x_{n_j},p)) \\
&= \liminf_{j\to\infty}(\cos d(x_{n_j},p)-\cos d(W_{n_j}x_{n_j},p)) \\
&= \liminf_{j\to\infty}(\cos d(x_{n_j},p)-\cos d(\alpha_{n_j,r}T_rU_{n_j,r-1}x_{n_j}\oplus(1-\alpha_{n_j,r})x_{n_j},p)) \\
&\leq \liminf_{j\to\infty}(\cos d(x_{n_j},p)-(\alpha_{n_j,r}\cos d(T_rU_{n_j,r-1}x_{n_j},p)+(1-\alpha_{n_j,r})\cos d(x_{n_j},p))) \\
&= \liminf_{j\to\infty}(\alpha_{n_j,r}\cos d(x_{n_j},p)-\alpha_{n_j,r}\cos d(T_rU_{n_j,r-1}x_{n_j},p)) \\
&\leq \alpha\liminf_{j\to\infty}(\cos d(x_{n_j},p)-\cos d(T_rU_{n_j,r-1}x_{n_j},p)) \\
&\leq \alpha\liminf_{j\to\infty}(\cos d(x_{n_j},p)-\cos d(U_{n_j,r-1}x_{n_j},p)) \\
&= \alpha\liminf_{j\to\infty}(\cos d(x_{n_j},p)-\cos d(\alpha_{n_j,r-1}T_{r-1}U_{n_j,r-2}x_{n_j}\oplus(1-\alpha_{n_j,r-1})x_{n_j},p)) \\
&\leq \alpha\liminf_{j\to\infty}(\cos d(x_{n_j},p)-(\alpha_{n_j,r-1}\cos d(T_{r-1}U_{n_j,r-2}x_{n_j},p)+(1-\alpha_{n_j,r-1})\cos d(x_{n_j},p))) \\
&= \alpha\liminf_{j\to\infty}(\alpha_{n_j,r-1}\cos d(x_{n_j},p)-\alpha_{n_j,r-1}\cos d(T_{r-1}U_{n_j,r-2}x_{n_j},p)) \\
&\leq \alpha^2\liminf_{j\to\infty}(\cos d(x_{n_j},p)-\cos d(T_{r-1}U_{n_j,r-2}x_{n_j},p)) \\
&\leq \cdots \\
&\leq \alpha^{r-1}\liminf_{j\to\infty}(\cos d(x_{n_j},p)-\cos d(T_2U_{n_j,1}x_{n_j},p)) \\
&\leq \alpha^{r-1}\liminf_{j\to\infty}(\cos d(x_{n_j},p)-\cos d(U_{n_j,1}x_{n_j},p)) \\
\end{align*}
\begin{align*}
&= \alpha^{r-1}\liminf_{j\to\infty}(\cos d(x_{n_j},p)-\cos d(\alpha_{n_j,1}T_1x_{n_j}\oplus(1-\alpha_{n_j,1})x_{n_j},p)) \\
&\leq\alpha^{r-1}\limsup_{j\to\infty}(\cos d(x_{n_j},p)-\cos d(\alpha_{n_j,1}T_1x_{n_j}\oplus(1-\alpha_{n_j,1})x_{n_j},p)) \\
&\leq 0.
\end{align*}

Thus we have
\[
\lim_{j\to\infty}(\cos d(x_{n_j},p)-\cos d(\alpha_{n_j,1}T_1x_{n_j}\oplus(1-\alpha_{n_j,1})x_{n_j},p))=0.
\]
Using the inequality $\sup_{j\in\mathbb{N}}d(x_{n_j},p)<\pi/2$, we also have
\[
\lim_{j\to\infty}\dfrac{\cos d(x_{n_j},p)}{\cos d(\alpha_{n_j,1}T_1x_{n_j}\oplus(1-\alpha_{n_j,1})x_{n_j},p)}=1.
\]
By Lemma \ref{Strongly quasinonexpansive sequence}, it follows that
\[
\lim_{j\to\infty}d(T_1x_{n_j},x_{n_j})=0.
\]
Put
\[
y_j^{(k)}:=U_{n_j,k}x_{n_j}
\]
for $k=1,2,\ldots,r-1$. We show that
\[
\lim_{j\to\infty}d(x_{n_j},y_j^{(k)})=0,\quad \lim_{j\to\infty}d(T_{k+1}y_j^{(k)},y_j^{(k)})=0
\]
by induction on $k=1,2,\ldots,r-1$. First, we consider the case $k=1$. We have
\begin{align*}
\lim_{j\to\infty}d(x_{n_j},y_j^{(1)}) &= \lim_{j\to\infty}d(x_{n_j},U_{n_j,1}x_{n_j}) \\
&= \lim_{j\to\infty}d(x_{n_j}, \alpha_{n_j,1}T_1x_{n_j}\oplus(1-\alpha_{n_j,1})x_{n_j}) \\
&=\lim_{j\to\infty}\alpha_{n_j}d(T_1x_{n_j},x_{n_j}) \\
&=0.
\end{align*}
On the other hand, by the calculation above we have
\begin{align*}
0 &\leq \liminf_{j\to\infty}(\cos d(x_{n_j},p)-\cos d(U_{n_j,2}x_{n_j},p)) \\
&= \liminf_{j\to\infty}(\cos d(x_{n_j},p)-\cos d(\alpha_{n_j,2}T_2U_{n_j,1}x_{n_j}\oplus(1-\alpha_{n_j,2})x_{n_j},p)) \\
&\leq \limsup_{j\to\infty}(\cos d(x_{n_j},p)-\cos d(\alpha_{n_j,2}T_2U_{n_j,1}x_{n_j}\oplus(1-\alpha_{n_j,2})x_{n_j},p)) \\
&\leq 0.
\end{align*}
Therefore
\[
\lim_{j\to\infty}(\cos d(x_{n_j},p)-\cos d(\alpha_{n_j,2}T_2U_{n_j,1}x_{n_j}\oplus(1-\alpha_{n_j,2})x_{n_j},p))=0.
\]
Using the inequality $\sup_{j\in\mathbb{N}}d(x_{n_j},p)<\pi/2$, we also have
\[
\lim_{j\to\infty}\dfrac{\cos d(x_{n_j},p)}{\cos d(\alpha_{n_j,2}T_2U_{n_j,1}x_{n_j}\oplus(1-\alpha_{n_j,2})x_{n_j},p)}=1.
\]
By Lemma \ref{Strongly quasinonexpansive sequence}, and since $\lim_{j\to\infty}d(x_{n_j},y_j^{(1)})=0$,
\[
\lim_{j\to\infty}d(T_2y_j^{(1)},y_j^{(1)})\leq \lim_{j\to\infty}(d(T_2y_j^{(1)},x_{n_j})+d(x_{n_j},y_j^{(1)}))=0.
\]
Hence we have that case $k=1$, that is,
\[
\lim_{j\to\infty}d(x_{n_j},y_j^{(1)})=0,\quad \lim_{j\to\infty}d(T_2y_j^{(1)},y_j^{(1)})=0.
\]
holds. Next, assume the hypothesis with $k=l$, that is,
\[
\lim_{j\to\infty}d(x_{n_j},y_j^{(l)})=0,\quad \lim_{j\to\infty}d(T_{l+1}y_j^{(l)},y_j^{(l)})=0
\]
holds. Then by assumption, we have
\begin{align*}
\lim_{j\to\infty}d(x_{n_j},y_j^{(l+1)}) &= \lim_{j\to\infty}d(x_{n_j},U_{n_j,l+1}x_{n_j}) \\
&= \lim_{j\to\infty}d(x_{n_j}, \alpha_{n_j,l+1}T_{l+1}U_{n_j,l}x_{n_j}\oplus(1-\alpha_{n_j,l+1})x_{n_j}) \\
&= \lim_{j\to\infty}d(x_{n_j}, \alpha_{n_j,l+1}T_{l+1}y_j^{(l)}\oplus(1-\alpha_{n_j,l+1})x_{n_j}) \\
&= \lim_{j\to\infty}d(x_{n_j}, \alpha_{n_j,l+1}y_j^{(l)}\oplus(1-\alpha_{n_j,l+1})x_{n_j}) \\
&= \lim_{j\to\infty}\alpha_{n_j,l+1}d(x_{n_j},y_j^{(l)}) \\
&= 0
\end{align*}
and
\begin{align*}
0 &\leq \liminf_{j\to\infty}(\cos d(x_{n_j},p)-\cos d(U_{n_j,l+2}x_{n_j},p)) \\
&= \liminf_{j\to\infty}(\cos d(x_{n_j},p)-\cos d(\alpha_{n_j,l+2}T_{l+2}U_{n_j,l+1}x_{n_j}\oplus(1-\alpha_{n_j,l+1})x_{n_j},p)) \\
&= \limsup_{j\to\infty}(\cos d(x_{n_j},p)-\cos d(\alpha_{n_j,l+2}T_{l+2}U_{n_j,l+1}x_{n_j}\oplus(1-\alpha_{n_j,l+1})x_{n_j},p)) \\
&\leq 0.
\end{align*}
Therefore
\[
\lim_{j\to\infty}(\cos d(x_{n_j},p)-\cos d(\alpha_{n_j,l+2}T_{l+2}U_{n_j,l+1}x_{n_j}\oplus(1-\alpha_{n_j,l+1})x_{n_j},p))=0.
\]
Using inequality $\sup_{j\in\mathbb{N}}d(x_{n_j}p)<\pi/2$, we have
\[
\lim_{j\to\infty}\dfrac{\cos d(x_{n_j},p)}{\cos d(\alpha_{n_j,l+2}T_{l+2}U_{n_j,l+1}x_{n_j}\oplus(1-\alpha_{n_j,l+1})x_{n_j},p)}=1.
\]
Since $\lim_{j\to\infty}d(x_{n_j},y_j^{(l+1)})=0$ and by Lemma \ref{Strongly quasinonexpansive sequence}, we have
\[
\lim_{j\to\infty}d(T_{l+2}y_j^{(l+1)},y_j^{(l+1)})=\lim_{j\to\infty}d(T_{l+2}y_j^{(l+1)},x_{n_j})=\lim_{j\to}d(T_{l+2}U_{n_j,l+1}x_{n_j},x_{n_j})=0.
\]
So, we have the hypothesis $k=l+1$, that is,
\[
\lim_{j\to\infty}d(x_{n_j},y_j^{(l+1)})=0,\quad \lim_{j\to\infty}d(T_{l+2}y_j^{(l+1)},y_j^{(l+1)})=0
\]
for $k=1,2,\ldots,r-1$. By induction, we obtain
\[
\lim_{j\to\infty}d(x_{n_j},y_j^{(k)})=0,\quad \lim_{j\to\infty}d(T_{k+1}y_j^{(k)},y_j^{(k)})=0
\]
for all $k=1,2,\ldots,r-1$. By Lemma \ref{subseq}, let $\{x_{n_{j_k}}\}$ be a $\Delta$-convergent subsequence of $\{x_{n_j}\}$ with the $\Delta$-limit $z$ such that $\lim_{k\to\infty}d(u,x_{n_{j_k}})=\liminf_{j\to\infty}d(u,x_{n_j})$. Then, since $T_1$ is $\Delta$-demiclosed and $\lim_{j\to\infty}d(x_{n_j},T_1x_{n_j})=0$, the $\Delta$-limit $z$ of $\{x_{n_{j_k}}\}$ belongs to $F(T_1)$. Similarly,  since $T_2$ is $\Delta$-demiclosed and $\lim_{j\to\infty}d(x_{n_j},y_j^{(1)})=\lim_{j\to\infty}d(y_j^{(1)}, T_2y_j^{(1)})=0$, $\{y_{j_k}^{(1)}\}$ is $\Delta$-convergent to $z$ and the $\Delta$-limit $z$ is belongs to $F(T_2)$. Using such techniques, we obtain $z\in F(T_i)$ for all $i=1,2,\ldots r$, and hence $z\in \bigcap_{i=1}^rF(T_i)=F$. Using Lemma \ref{conv} and the definition of the metric projection, we have
\begin{align*}
\liminf_{j\to\infty}d(u,W_{n_j}x_{n_j}) &= \liminf_{j\to\infty}d(u,\alpha_{n_j,r}T_rU_{n_j,r-1}x_{n_j}\oplus(1-\alpha_{n_j})x_{n_j}) \\
&= \liminf_{j\to\infty}d(u,\alpha_{n_j,r}T_ry_j^{(r-1)}\oplus(1-\alpha_{n_j})x_{n_j}) \\
&= \liminf_{j\to\infty}d(u,\alpha_{n_j,r}y_j^{(r-1)}\oplus(1-\alpha_{n_j})x_{n_j}) \\
&= \liminf_{j\to\infty}d(u,\alpha_{n_j,r}x_{n_j}\oplus(1-\alpha_{n_j})x_{n_j}) \\
&= \liminf_{j\to\infty}d(u,x_{n_j}) \\
&= \lim_{k\to\infty}d(u,x_{n_{j_k}}) \\
&\geq d(u,z) \\
&\geq d(u,P_Fu).
\end{align*}

Therefore, we obtain

\begin{align*}
\limsup_{j\to\infty}t_{n_j} &= \limsup_{j\to\infty}\left(1-\dfrac{\cos d(u,p)}{\sin d(u,W_{n_j}x_{n_j})\tan(2^{-1}\beta_{n_j}d(u,W_{n_j}x_{n_j}))+\cos d(u,W_{n_j}x_{n_j})}\right) \\
&= \limsup_{j\to\infty}\left(1-\dfrac{\cos d(u,p)}{0+\cos d(u,W_{n_j}x_{n_j})}\right) \\
&= 1-\dfrac{\cos d(u,p)}{\cos (\liminf_{j\to\infty}d(u,W_{n_j}x_{n_j}))} \\
&\leq 1-\dfrac{\cos d(u,p)}{\cos d(u,z)} \\
&\leq 0.
\end{align*}

By Lemma \ref{bankoku}, we have that $\lim_{n\to\infty}s_n=0$, that is, $\{x_n\}$ converges to $p=P_Fu$, and we finish the proof.
\end{proof}

\begin{remark}\normalfont
By Lemma \ref{delta-demiclosed}, a nonexpansive mapping defined on a CAT(1) space having a fixed point is quasinonexpansive and $\Delta$-demiclosed.
\end{remark}

\begin{remark}
In general, if $T_1,T_2,\ldots,T_r$ are nonexpansive, then $W$-mapping generated by $T_1,T_2,\ldots,T_r$ and $\alpha_1,\alpha_2,\ldots,\alpha_r$ is not necessarily nonexpansive.
\end{remark}

\section{Applications}\label{sec:Applications}

Let us recall some basic notation about functions on metric space. Let $X$ be a geodesic metric space and let $f$ be a function from $X$ into $(-\infty,\infty]$. We say $f$ is lower semicontinuous if the set $\{ x\in X\ |\ f(x)\leq a\}$ is closed for all $a\in\mathbb{R}$. The function $f$ is said to be proper if the set $\{x\in X\ |\ f(x)\neq\infty\}$ is nonempty. We say $f$ is convex if
\[
f(tx\oplus(1-t)y)\leq tf(x)+(1-t)f(y)
\]
for all $x,y\in X$ and $t\in(0,1)$. Let $X$ be a complete $\CAT(1)$ space such that $d(v,v')<\pi/2$ for every $v,v'\in X$. Let $f$ be a proper lower semicontinuous convex function from $X$ into $(-\infty,\infty]$. A resolvent of $f$ is defined by
\begin{align}\label{eq:resolvent-tansin}
R_fx:=\Argmin_{y\in X}\{f(y)+\tan d(y,x)\sin d(y,x)\}
\end{align}
in \cite{Kimura-Kohsaka}. Another type of the resolvent of $f$ is defined by
\begin{align}\label{eq:resolvent-logcos}
R_fx:=\Argmin_{y\in X}\left\{f(y)-\log\cos d(y,x)\right\}
\end{align}
in \cite{Kajimura-Kimura}. Both resolvents are quasinonexpansive, $\Delta$-demiclosed, and satisfy $F(R_f)=\Argmin_Xf$ (\cite{Kimura-Kohsaka,Kajimura-Kimura}). So, we can approximate a common minimizer of a finite number of functions by the following theorem.

\begin{theorem}\label{Convex function}
Let $X$ be a complete $\CAT(1)$ space such that $d(v,v')<\pi/2$ for every $v,v'\in X$. Let $f_1,f_2,\ldots,f_r$ be a finite number of convex function from $X$ into $(-\infty,\infty]$ such that $F:=\bigcap_{i=1}^r\Argmin_Xf_i\neq\emptyset$, and let $\alpha_{n,1},a_{n,2},\ldots,\alpha_{n,r}$ be real numbers for $n\in\mathbb{N}$ such that $\alpha_{n,i}\in[a,1-a]$ for every $i=1,2,\ldots,r$, where $0<a<1/2$. Let $R_{f_i}$ be a resolvent defined by either \eqref{eq:resolvent-tansin} or \eqref{eq:resolvent-logcos} for $i=1,2,\ldots,r$. Let $W_n$ be the W-mappings of $X$ into itself generated by $R_{f_1},R_{f_2},\ldots,R_{f_r}$ and $\alpha_{n,1},\alpha_{n,2},\ldots,\alpha_{n,r}$ for $n\in\mathbb{N}$. Let $\{\beta_n\}$ be a sequence of real numbers such that $0<\beta_n<1$ for every $n\in\mathbb{N}, \lim_{n\to\infty}\beta_n=0$, and $\sum_{n=1}^\infty\beta_n=\infty$. For given points $u,x_1\in X$, let $\{x_n\}$ be a sequence in $X$ generated by
\[
x_{n+1}=\beta_nu\oplus(1-\beta_n)W_nx_n
\]
for $n\in\mathbb{N}$. Suppose that one of the following conditions holds$:$
\begin{enumerate}
\item[(a)] $\sup_{v,v'\in X}d(v,v')<\pi/2;$
\item[(b)] $d(u,P_Fu)<\pi/4$ and $d(u,P_Fu)+d(x_1,P_Fu)<\pi/2;$
\item[(c)] $\sum_{n=1}^\infty\beta_n^2=\infty.$
\end{enumerate}
Then $\{x_n\}$ converges to $P_Fu$.
\end{theorem}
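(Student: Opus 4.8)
The plan is to recognize Theorem \ref{Convex function} as a direct specialization of Theorem \ref{Main}, obtained by setting $T_i := R_{f_i}$ for $i = 1, 2, \ldots, r$. The entire strategy is a reduction: once I confirm that the resolvents satisfy the structural hypotheses imposed on the maps $T_1, \ldots, T_r$ in Theorem \ref{Main}, the conclusion follows verbatim, since the iteration scheme, the $W$-mappings, the conditions on $\{\beta_n\}$, and the trichotomy (a)--(c) are literally identical in the two statements.

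First I would invoke the properties of the resolvents recalled immediately before the statement. For either definition \eqref{eq:resolvent-tansin} or \eqref{eq:resolvent-logcos}, each $R_{f_i}$ is quasinonexpansive and $\Delta$-demiclosed, and its fixed point set satisfies $F(R_{f_i}) = \Argmin_X f_i$; these facts are established in \cite{Kimura-Kohsaka} and \cite{Kajimura-Kimura}. This matches exactly the standing assumptions on the family $T_1, \ldots, T_r$ in Theorem \ref{Main}.

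Next I would translate the common-minimizer hypothesis into the common-fixed-point hypothesis. Using the fixed point characterization,
\[
\bigcap_{i=1}^r F(R_{f_i}) = \bigcap_{i=1}^r \Argmin_X f_i = F \neq \emptyset,
\]
so the nonemptiness assumption required by Theorem \ref{Main} holds, and the metric projection $P_F u$ appearing there coincides with the one in the present statement. The $W$-mappings $W_n$ generated by $R_{f_1}, \ldots, R_{f_r}$ and the coefficients $\alpha_{n,i} \in [a, 1-a]$ are precisely those in Theorem \ref{Main}.

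Finally, with all hypotheses verified, I would apply Theorem \ref{Main} to conclude that $\{x_n\}$ converges to $P_F u$. There is no genuine analytic obstacle in this argument; the substantive work is entirely outsourced to the cited structural results on resolvents. The only point requiring care is the bookkeeping, namely confirming that both resolvent definitions deliver the same three required properties (quasinonexpansiveness, $\Delta$-demiclosedness, and the equality $F(R_{f_i}) = \Argmin_X f_i$), so that the hypotheses of Theorem \ref{Main} are met regardless of which resolvent is chosen for each $f_i$.
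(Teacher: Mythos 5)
Your proposal is correct and is exactly the argument the paper intends: Theorem \ref{Convex function} is stated as an immediate corollary of Theorem \ref{Main}, justified by the cited facts that both resolvents \eqref{eq:resolvent-tansin} and \eqref{eq:resolvent-logcos} are quasinonexpansive, $\Delta$-demiclosed, and satisfy $F(R_{f_i})=\Argmin_X f_i$, so that $\bigcap_{i=1}^r F(R_{f_i})=F\neq\emptyset$. The only caveat worth noting is that these resolvent properties require each $f_i$ to be proper and lower semicontinuous (as in the paragraph preceding the theorem), a hypothesis implicitly assumed in the statement.
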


Let us consider a more specialized situation. For a closed convex subset $C$ of a complete $\CAT(1)$ space $X$, put
\[
i_C(x):=\left\{ \begin{array}{ll}
0 & (x\in C) \\
\infty & (x\notin C).
\end{array} \right.
\]
This function $i_C$ is a proper lower semicontinuous convex function. Thus the resolvent $R_{i_C}$ of $i_C$ is defined by either \eqref{eq:resolvent-tansin} or \eqref{eq:resolvent-logcos}, and it is quasinonexpansive and $\Delta$-demiclosed. In fact, we know $R_{i_C}=P_C$ and $F(R_{i_C})=\Argmin i_C=C$ for both definitions \eqref{eq:resolvent-tansin} and \eqref{eq:resolvent-logcos}. Thus we can apply Theorem \ref{Main} andhave an approximation of the nearest point in the intersection of finite family of closed convex subsets from a given point by using corresponding metric projection of each subset by the following theorem.

\begin{theorem}\label{Metric projection}
Let $X$ be a complete $\CAT(1)$ space such that $d(v,v')<\pi/2$ for every $v,v'\in X$. Let $C_1,C_2,\ldots,C_r$ be a finite number of closed convex subset of X such that $C:=\bigcap_{i=1}^rC_i\neq\emptyset$, and let $\alpha_{n,1},a_{n,2},\ldots,\alpha_{n,r}$ be real numbers for $n\in\mathbb{N}$ such that $\alpha_{n,i}\in[a,1-a]$ for every $i=1,2,\ldots,r$, where $0<a<1/2$. Let $W_n$ be the W-mappings of $X$ into itself generated by $P_{C_1},P_{C_2},\ldots,P_{C_r}$ and $\alpha_{n,1},\alpha_{n,2},\ldots,\alpha_{n,r}$ for $n\in\mathbb{N}$. Let $\{\beta_n\}$ be a sequence of real numbers such that $0<\beta_n<1$ for every $n\in\mathbb{N}, \lim_{n\to\infty}\beta_n=0$ and $\sum_{n=1}^\infty\beta_n=\infty$. For a given points $u,x_1\in X$, let $\{x_n\}$ be a sequence in $X$ generated by
\[
x_{n+1}=\beta_nu\oplus(1-\beta_n)W_nx_n
\]
for $n\in\mathbb{N}$. Suppose that one of the following conditions holds$:$
\begin{enumerate}
\item[(a)] $\sup_{v,v'\in X}d(v,v')<\pi/2;$
\item[(b)] $d(u,P_Cu)<\pi/4$ and $d(u,P_Cu)+d(x_1,P_Cu)<\pi/2;$
\item[(c)] $\sum_{n=1}^\infty\beta_n^2=\infty.$
\end{enumerate}
Then $\{x_n\}$ converges to $P_Cu$.
\end{theorem}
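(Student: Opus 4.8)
The plan is to obtain Theorem \ref{Metric projection} as a direct specialization of Theorem \ref{Main}, taking $T_i := P_{C_i}$ for each $i = 1, 2, \ldots, r$. To invoke Theorem \ref{Main}, I must check that each $P_{C_i}$ is quasinonexpansive and $\Delta$-demiclosed, and that the common fixed-point set $F := \bigcap_{i=1}^r F(P_{C_i})$ coincides with $C = \bigcap_{i=1}^r C_i$, so that $P_F u = P_C u$.

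First I would invoke the identification $P_{C_i} = R_{i_{C_i}}$ recorded in the discussion preceding the theorem. Since $i_{C_i}$ is a proper lower semicontinuous convex function on $X$, its resolvent $R_{i_{C_i}}$, defined by either \eqref{eq:resolvent-tansin} or \eqref{eq:resolvent-logcos}, is quasinonexpansive and $\Delta$-demiclosed and satisfies $F(R_{i_{C_i}}) = \Argmin_X i_{C_i}$. Because $i_{C_i}$ equals $0$ on $C_i$ and $\infty$ off $C_i$, we have $\Argmin_X i_{C_i} = C_i$, whence $F(P_{C_i}) = C_i$. Consequently $F = \bigcap_{i=1}^r F(P_{C_i}) = \bigcap_{i=1}^r C_i = C$, which is nonempty by hypothesis, and the metric projections agree: $P_F u = P_C u$.

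With these facts in hand, every hypothesis of Theorem \ref{Main} is met. The mappings $T_i = P_{C_i}$ are quasinonexpansive and $\Delta$-demiclosed, $F = C \neq \emptyset$, the coefficients $\alpha_{n,i} \in [a, 1-a]$ and the sequence $\{\beta_n\}$ satisfy the stated conditions, and the mappings $W_n$ are precisely the $W$-mappings generated by $P_{C_1}, P_{C_2}, \ldots, P_{C_r}$ and $\alpha_{n,1}, \alpha_{n,2}, \ldots, \alpha_{n,r}$. Moreover, each alternative condition (a), (b), (c) of Theorem \ref{Metric projection} is literally the corresponding condition of Theorem \ref{Main} after the substitution $P_C = P_F$. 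Applying Theorem \ref{Main} then yields that $\{x_n\}$ converges to $P_F u = P_C u$, as desired.

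Since the argument reduces entirely to quoting established properties of the resolvent, I do not expect any substantive obstacle; the only point requiring care is the twofold identification $R_{i_{C_i}} = P_{C_i}$ and $\Argmin_X i_{C_i} = C_i$, which is exactly what guarantees that the fixed-point set of the generating family equals $C$ and hence that the limit furnished by Theorem \ref{Main} is the sought nearest point $P_C u$.
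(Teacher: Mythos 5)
Your proposal is correct and is essentially the paper's own argument: the paragraph preceding Theorem \ref{Metric projection} makes exactly the identification $P_{C_i}=R_{i_{C_i}}$ with $F(P_{C_i})=\Argmin_X i_{C_i}=C_i$, and then derives the theorem by applying Theorem \ref{Main} to the quasinonexpansive, $\Delta$-demiclosed mappings $P_{C_1},\ldots,P_{C_r}$. No substantive difference.
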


In the introduction we mention that there exists an example which is quasinonexpansive but not strongly quasinonexpansive. The following is such an example.

\begin{example}\label{example}\normalfont
A closed interval $[-1,1]$ is a complete $\CAT(1)$ space. Let $T:[-1,1]\to[-1,1]$ be defined by $Tx:=-x$. Then $F(T)=\{0\}$. It is easy to obtain that $T$ is quasinonexpansive and $\Delta$-demiclosed but it is not strongly quasinonexpansive.
\end{example}

\section*{Acknowledgment}
The first auther thanks Shin Nayatani and Shintarou Yanagida for valuable comments.

\end{document}